\DeclareFontFamily{OT1}{pzc}{}
\DeclareFontShape{OT1}{pzc}{m}{it}{<-> s * [1.10] pzcmi7t}{}
\DeclareMathAlphabet{\mathpzc}{OT1}{pzc}{m}{it}
\numberwithin{equation}{section}
\newtheorem{Proposition}[equation]{Proposition}
\newtheorem{Lemma}[equation]{Lemma}
\newtheorem{Theorem}[equation]{Theorem}
\newtheorem{Corollary}[equation]{Corollary}
\newtheorem{Inductive Assumption}[equation]{Inductive Assumption}
\theoremstyle{definition}  
\newtheorem{Example}[equation]{Example}
\newcommand\Comment[2][\relax]{\space\par\medskip\noindent%
   \fbox{\begin{minipage}{\textwidth}\textbf{Comment\ifx\relax#1\else---#1\fi}\newline%
        #2\end{minipage}}\medskip
}
\def\bi{\text{\boldmath$i$}}
\def\bj{\text{\boldmath$j$}}
\def\bk{\text{\boldmath$k$}}
\def\b1{\text{\boldmath$1$}}
\def\pmod#1{\text{ }(\text{\rm mod } #1)\,}
\newcommand{\CT}{\sf{CT}}
\def\cont{{\operatorname{cont}}}
\def\core{{\operatorname{core}}}
\newcommand{\Z}{\mathbb{Z}}
\newcommand{\N}{\mathbb{N}}
\def\phi{{\varphi}}
\newcommand{\di}{{\operatorname{div}}}
\newcommand{\Fock}{{\mathscr F}}
\newcommand{\quot}{\operatorname{quot}}
\newcommand{\la}{\lambda}
\newcommand{\La}{\Lambda}
\newcommand{\al}{\alpha}
\def\Si{\mathfrak{S}}
\newcommand{\si}{\sigma}
\newcommand{\om}{\omega}
\newcommand{\Om}{\Omega}
\newcommand{\de}{\delta}
\newcommand{\ka}{\kappa}
\newcommand{\pzs}{{\mathsf{s}}}
\newcommand{\pzp}{{\mathsf{p}}}
\newcommand{\pzm}{{\mathsf{m}}}
\newcommand{\pze}{{\mathsf{e}}}
\newcommand{\pzh}{{\mathsf{h}}}
\newcommand{\pzq}{{\mathsf{q}}}
\newcommand{\pzQ}{{\mathsf{Q}}}
\newcommand{\pzP}{{\mathsf{P}}}
\newcommand{\pzf}{{\mathsf{f}}}
\newcommand{\Res}{{\operatorname{Res}}}
\newcommand{\C}{{\mathbb C}}
\newcommand{\Q}{{\mathbb Q}}
\newcommand{\Sym}{{\mathsf{Sym}}}
\newcommand{\ttA}{{\tt A}}
\newcommand{\ttB}{{\tt B}}
\newcommand{\ttC}{{\tt C}}
\newcommand{\sfp}{{\mathpzc{p}}}
\def\ttB{{\mathtt B}}
\def\col{{\operatorname{col}}}
\newcommand{\Add}{\operatorname{Ad}}
\newcommand{\Rem}{\operatorname{Re}}
\newcommand{\PA}{\operatorname{PAd}}
\newcommand{\PR}{\operatorname{PRe}}
\def\g{{\mathfrak g}}
\def\Par{{\mathscr P}}
\def\ula{{\underline{\lambda}}}
\def\ual{{\underline{\al}}}
\def\b{\mathfrak{b}}
\def\k{\Bbbk}
\def\spa{\operatorname{span}}
\def\height{{\operatorname{ht}}}
\def\wt{{\operatorname{wt}}}
\def\iso{\stackrel{\sim}{\longrightarrow}}
\def\T{{\sf T}}
\def\lan{\langle}
\def\ran{\rangle}
\def\ttB{{\mathtt B}}
  \gdef\set#1{\mathinner{\lbrace\,{\mathcode`\|"8000%
  \let|\midvert #1}\,\rbrace}}
\def\midvert{\egroup\mid\bgroup}
\colorlet{darkgreen}{green!50!black}
\tikzset{dots/.style={very thick,loosely dotted},
         greendot/.style={fill,circle,color=darkgreen,inner sep=1.5pt,outer sep=0},
         blackdot/.style={fill,circle,color=black,inner sep=1.1pt,outer sep=0},
         graydot/.style={fill,circle,color=gray,inner sep=1.1pt,outer sep=0},
         reddot/.style={fill,circle,color=red,inner sep=1.1pt,outer sep=0},
         bluedot/.style={fill,circle,color=blue,inner sep=1.1pt,outer sep=0}
}
\def\greendot(#1,#2){\node[greendot] at(#1,#2){}}
\def\blackdot(#1,#2){\node[blackdot] at(#1,#2){}}
\def\graydot(#1,#2){\node[graydot] at(#1,#2){}}
\def\reddot(#1,#2){\node[reddot] at(#1,#2){}}
\def\bluedot(#1,#2){\node[bluedot] at(#1,#2){}}
\newenvironment{braid}{
  \begin{tikzpicture}[baseline=6mm,black,line width=.7pt, scale=0.32,
                      draw/.append style={rounded corners},
                      every node/.append style={font=\fontsize{5}{5}\selectfont}]%
  }{\end{tikzpicture}
}
\def\Grid(#1,#2){
  \draw[very thin,gray,step=2mm] (0,0)grid(#1,#2);
  \draw[very thin,darkgreen,step=10mm] (0,0)grid(#1,#2);
}
\newcommand\Tableau[2][\relax]{
  \begin{tikzpicture}[scale=0.5,draw/.append style={thick,black}]
    \ifx\relax#1\relax%
    \else 
      \foreach\box in {#1} { \filldraw[blue!30]\box+(-.5,-.5)rectangle++(.5,.5); }
    \fi
    \newcount\row\newcount\col
    \row=0
    \foreach \Row in {#2} {
       \col=1
       \foreach\k in \Row {
          \draw(\the\col,\the\row)+(-.5,-.5)rectangle++(.5,.5);
          \draw(\the\col,\the\row)node{\k};
          \global\advance\col by 1
       }
       \global\advance\row by -1
    }
  \end{tikzpicture}
}
\newcommand\YoungDiagram[2][\relax]{
  \begin{tikzpicture}[scale=0.5,draw/.append style={thick,black}]
    \ifx\relax#1\relax%
    \else 
    \foreach\box in {#1} {
      \filldraw[blue!30]\box rectangle ++(1,1);
    }
    \fi
    \newcount\row
    \row=0
    \foreach \col in {#2} {
       \draw(1,\the\row)grid ++(\col,1);
       \global\advance\row by -1
    }
  \end{tikzpicture}
}
\newenvironment{Young}{\begingroup
       \def\vr{\vrule height0.89\hoogte width\dikte depth 0.2\hoogte}
       \def\fbox##1{\vbox{\offinterlineskip
                    \hrule height\dikte
                    \hbox to \breedte{\vr\hfill##1\hfill\vr}
                    \hrule height\dikte}}
       \vbox\bgroup \offinterlineskip \tabskip=-\dikte \lineskip=-\dikte
            \halign\bgroup &\fbox{##\unskip}\unskip  \crcr }
       {\egroup\egroup\endgroup}
\def\diagram#1{\relax\ifmmode\vcenter{\,\begin{Young}#1\end{Young}\,}\else%
              $\vcenter{\,\begin{Young}#1\end{Young}\,}$\fi}
\begin{document}

\title[Dimensions of RoCK blocks of quiver Hecke superalgebras]{{On dimensions of RoCK blocks of cyclotomic quiver Hecke superalgebras}}

\author{\sc Alexander Kleshchev}
\address{Department of Mathematics\\ University of Oregon\\ Eugene\\ OR 97403, USA}
\email{klesh@uoregon.edu}

\dedicatory{To the memory of Gary Seitz}

\subjclass[2020]{20C30, 17B10}


\thanks{The author was supported by the NSF grant DMS-2101791. }

\begin{abstract}
We explicitly compute the dimensions of certain idempotent truncations of RoCK blocks of cyclotomic quiver Hecke superalgebras. Equivalently, this amounts to a computation of the value of the Shapovalov form on certain explicit vectors in the basic  representations of twisted affine Kac-Moody Lie algebras of type $A$. 
\end{abstract}

\maketitle

\section{Introduction}

The goal of this paper is to obtain a technical (but neat) result computing the dimensions of certain idempotent truncations of RoCK blocks of cyclotomic quiver Hecke superalgebras. Equivalently, this amounts to a computation of the value of the Shapovalov form on certain explicit vectors in the basic  representation $V(\La_0)$ of twisted affine Kac-Moody Lie algebra  $\g$ of type $A_{2\ell}^{(2)}$. 

To state the result, let $\g$ be the Kac-Moody Lie algebra of type $A_{2\ell}^{(2)}$, with a normalized invariant form $(\cdot|\cdot)$ on the corresponding weight lattice $P$, and the Weyl group $W$. Let   
$$I=\{0,1,\dots,\ell\}, \quad J:=I\setminus\{\ell\},$$
$\{\al_i\mid i\in I\}$ be the simple roots, $\{\La_i\mid i\in I\}$ be the fundamental dominant weights, 
and $Q_+\subset P$ be the set of $\Z_{\geq 0}$-linear combinations of the simple roots. For the negative Chevalley generators $\{f_i\mid i\in I\}$ of $\g$, we have the divided powers $f_i^{(k)}:=f_i^k/k!\in U(\g)$ for $k\in \Z_{\geq 0}$. Fix a non-zero highest weight vector $v_+$ of the irreducible $\g$-module $V(\La_0)$ with highest weight $\La_0$. Let $(\cdot,\cdot)$ be the Shapovalov form on $V(\La_0)$  
such that $(v_+,v_+)=1$. 

For every $w\in W$ with reduced decomposition $w=r_{i_t}\cdots r_{i_1}$, setting 
\begin{equation}\label{EAk}
a_k:=( r_{i_{k-1}}\cdots r_{i_1}\La_0\mid \al_{i_k}^\vee)\qquad (k=1,\dots,t), 
\end{equation}
it is easy to see that $a_1,\dots,a_t\in\Z_{\geq 0}$ and $$v_w:=f_{i_t}^{(a_t)}\cdots f_{i_1}^{(a_1)}v_+$$ is a non-zero vector of the weight space $V(\La_0)_{w\La_0}$, 
which does not depend on the choice of reduced decomposition and satisfies 
$
(v_w,v_w)=1.
$

For every $j\in J$ and $m\in \Z_{\geq 0}$, we define the divided power monomial 
\begin{equation}\label{EFmj}
f(m,j):=f_j^{(m)}\cdots f_1^{(m)}f_0^{(2m)}f_1^{(m)}\cdots f_j^{(m)}f_{j+1}^{(2m)}\cdots f_{\ell-1}^{(2m)}f_\ell^{(m)}\in U(\g).
\end{equation}
(with $f(0,j)$ interpreted as $1$). Recalling the null-root $\de:=2\al_0+\dots+2\al_{\ell-1}+\al_\ell$, note that the monomial $f(m,j)$ has total weight $-m\de$. More generally, given $d, n\in \Z_{>0}$, a composition $\mu=(\mu_1,\dots,\mu_n)$ of $d$ into $n$ non-negative parts and a tuple $\bj=(j_1,\dots,j_n)\in J^n$ we define 
\begin{equation}\label{EFmubj}
f(\mu,\bj):=f(\mu_n,j_n)\cdots f(\mu_1,j_1)\in U(\g).
\end{equation}
For a composition $\om_d:=(1,\dots,1)$ of $d$, we also define 
\begin{equation}\label{Efom}
f(\om_d):=\sum_{\bj\in J^d}f(\om_d,\bj)=\sum_{j_1,\dots,j_d\in J}f(1,j_d)\cdots f(1,j_1). 
\end{equation}

If the weight space $V(\La_0)_{\La_0-\theta}$ for $\theta\in Q_+$ is non-zero, then we can write $\theta=\La_0-w\La_0+d\de$ for some $w$ in the Weyl group $W$ and a unique $d\in \Z_{\geq 0}$. 
We then say that $\theta$ is {\em RoCK}\, if $(\theta\mid \al_0^\vee)\geq 2d$ and $(\theta\mid \al_i^\vee)\geq d-1$ for $i=1,\dots,\ell$. 
This is equivalent to the cyclotomic quiver Hecke algebra $R_\theta^{\La_0}$ being a {\em RoCK block}, as defined in \cite[Section 4.1]{KlLi}. To each composition $\mu=(\mu_1,\dots,\mu_n)$ and tuple $\bj=(j_1,\dots,j_n)\in J^n$, we can define the corresponding divided power idempotents 
$e(\mu,\bj)\in R_\theta^{\La_0}$. The idempotents $e(\om_d,\bj)$ are then distinct and orthogonal to each other for distinct $\bj\in J^d$, so we also have the idempotent $e(\om_d):=\sum_{\bj\in J^d}e(\om_d,\bj)$.


\vspace{2mm}
\noindent
{\bf Main Theorem.}
{\em
Let weight $\theta=\La_0-w\La_0+d\de$ be RoCK, $n\in\Z_{>0}$, $\mu=(\mu_1,\dots,\mu_n)$ be a composition of $d$ with $n$ parts,  and  $\bj=(j_1,\dots,j_n)\in J^n$. 
Set  
\begin{equation}
\label{EmMuj}
|\mu,\bj|_{\ell-1}:=\sum_{\substack{1\leq r\leq n,\\ j_r=\ell-1}}\mu_r.
\end{equation} 
Then 
$$
\dim e(\mu,\bj) R_\theta^{\La_0}e(\om_d)=(f(\mu,\bj)v_w,f(\om_d)v_w)=\binom{d}{\mu_1\,\cdots\,\mu_n}\,4^{d-|\mu,\bj|_{\ell-1}}\,3^{|\mu,\bj|_{\ell-1}}. 
$$
}
\vspace{2mm}

The first equality in the theorem is a known consequence of the Kang-Kashiwara-Oh categorification, so the main work is to  prove of the second equality. This is proved in Theorem~\ref{TMainHalf'}.

The dimension formula for $e(\mu,\bj) R_\theta^{\La_0}e(\om_d)$ obtained in the Main Theorem is a shadow of the fact that the Gelfand-Graev idempotent truncation of a RoCK block $R_\theta^{\La_0}$ is isomorphic to a generalized Schur algebra corresponding to a certain Brauer tree algebra. In fact, our Main Theorem is a key step needed for the proof of this isomorphism in   \cite{KlSpinEKTwo}.

\subsection*{Acknowledgement} I am grateful to Matt Fayers for communicating the formula (\ref{EChi}) for the Fock space vectors corresponding to the irreducible supercharacters of double covers of symmetric groups.

\section{Shapovalov forms}

\subsection{Lie theoretic notation}
\label{SSLTN}
Let $\g$ be the Kac-Moody Lie algebra of type $A_{2\ell}^{(2)}$ (over $\C)$,  
see \cite[Chapter 4]{Kac}. 
We set 
$$p:=2\ell+1.$$

The Dynkin diagram of $\g$ has vertices labeled by $I=\{0,1,\dots,\ell\}$:
\vspace{5mm} 
$$
{\begin{picture}(330, 15)%
\put(6,5){\circle{4}}%
\put(101,2.45){$<$}%
\put(12,2.45){$<$}%
\put(236,2.42){$<$}%
\put(25, 5){\circle{4}}%
\put(44, 5){\circle{4}}%
\put(8, 4){\line(1, 0){15.5}}%
\put(8, 6){\line(1, 0){15.5}}%
\put(27, 5){\line(1, 0){15}}%
\put(46, 5){\line(1, 0){1}}%
\put(49, 5){\line(1, 0){1}}%
\put(52, 5){\line(1, 0){1}}%
\put(55, 5){\line(1, 0){1}}%
\put(58, 5){\line(1, 0){1}}%
\put(61, 5){\line(1, 0){1}}%
\put(64, 5){\line(1, 0){1}}%
\put(67, 5){\line(1, 0){1}}%
\put(70, 5){\line(1, 0){1}}%
\put(73, 5){\line(1, 0){1}}%
\put(76, 5){\circle{4}}%
\put(78, 5){\line(1, 0){15}}%
\put(95, 5){\circle{4}}%
\put(114,5){\circle{4}}%
\put(97, 4){\line(1, 0){15.5}}%
\put(97, 6){\line(1, 0){15.5}}%
\put(6, 11){\makebox(0, 0)[b]{$_{0}$}}%
\put(25, 11){\makebox(0, 0)[b]{$_{1}$}}%
\put(44, 11){\makebox(0, 0)[b]{$_{{2}}$}}%
\put(75, 11){\makebox(0, 0)[b]{$_{{\ell-2}}$}}%
\put(96, 11){\makebox(0, 0)[b]{$_{{\ell-1}}$}}%
\put(114, 11){\makebox(0, 0)[b]{$_{{\ell}}$}}%
\put(230,5){\circle{4}}%
\put(249,5){\circle{4}}%
\put(231.3,3.2){\line(1,0){16.6}}%
\put(232,4.4){\line(1,0){15.2}}%
\put(232,5.6){\line(1,0){15.2}}%
\put(231.3,6.8){\line(1,0){16.6}}%
\put(230, 11){\makebox(0, 0)[b]{$_{{0}}$}}%
\put(249, 11){\makebox(0, 0)[b]{$_{{1}}$}}%
\put(290,2){\makebox(0,0)[b]{\quad if $\ell = 1$.}}%
\put(175,2){\makebox(0,0)[b]{if $\ell \geq 2$, \qquad and}}%
\end{picture}}
$$

\vspace{3mm}
We have the standard Chevalley generators 
$\{e_i,f_i,h_i\mid i\in I\}$ of $\g$ and the Chevalley anti-involution 
$$
\si:\g\to\g, \ e_i\mapsto f_i,\  f_i\mapsto e_i,\ h_i\mapsto h_i.
$$

We have the {\em weight lattice} $P$ of $\g$, the subset of all {\em dominant integral weights} $P_+\subset P$, and the set $\{\alpha_i \mid i \in I\}\subset P$ of the {\em simple roots}\, of $\g$. 
We denote by $Q$ the sublattice of $P$ generated by the simple roots and set 
$$
Q_+:=\big\{\sum_{i\in I}m_i\al_i\mid m_i\in\Z_{\geq 0}\  \text{for all $i\in I$}\big\}\subset Q.
$$
For $\theta=\sum_{i\in I}m_i\al_i\in Q_+$, its {\em height}\, is  
$$
\height(\theta):=\sum_{i\in I}m_i.
$$
For $\theta\in Q_+$ of height $n$, we set
$$I^\theta:=\{(i_1,\cdots, i_n)\in I^n\mid \al_{i_1}+\dots+\al_{i_n}=\theta\}.$$
We have the null-root  
$$
\delta = \sum_{i=0}^{\ell - 1} 2 \alpha_i + \alpha_\ell
$$ 
with $\height(\de)=2\ell+1=p$. 

We denote by $(.|.)$ a normalized invariant form on $P$ whose Gram matrix with respect to the linearly independent set $\al_0,\al_1,\dots,\al_\ell$ is: 

\vspace{1mm}
$$
\left(
\begin{matrix}
2 & -2 & 0 & \cdots & 0 & 0 & 0 \\
-2 & 4 & -2 & \cdots & 0 & 0 & 0 \\
0 & -2 & 4 & \cdots & 0 & 0 & 0 \\
 & & & \ddots & & & \\
0 & 0 & 0 & \dots & 4 & -2& 0 \\
0 & 0 & 0 & \dots & -2 & 4& -4 \\
0 & 0 & 0 & \dots & 0 & -4& 8 \\
\end{matrix}
\right)
\quad
\text{if $\ell\geq 2$, and}
\quad
\left(
\begin{matrix}
2 & -4 \\
-4 & 8
\end{matrix}
\right)
\quad \text{if $\ell=1$.}
$$

\vspace{4mm}
Recall from \cite[\S\S3.7,3.13]{Kac} the (affine) {\em Weyl group} $W$ generated by the fundamental reflections $\{r_i\mid i\in I\}$  as a Coxeter group. The form $(.|.)$ is $W$-invariant, see \cite[Proposition 3.9]{Kac}.

For $\La\in P_+$, we denote by $V(\La)$ the {\em irreducible integrable $\g$-module}\, of highest weight $\La$. We fix a non-zero highest weight vector $v_+\in V(\La)_\La$. The {\em Shapovalov form}\, is the unique symmetric bilinear form $(\cdot,\cdot)$ on $V(\La)$ such that  $(v_+,v_+)=1$ and 
\begin{equation}\label{EF()}
(xv,w)=(v,\sigma(x)w)\qquad (x\in\g,\ v,w\in V(\La)).
\end{equation}

\begin{Lemma} \label{LWExt} 
Let $\La\in P_+$ and\, $w\in W$ with a reduced decomposition\, $w=r_{i_t}\cdots r_{i_1}$. Then:
\begin{enumerate}
\item[{\rm (i)}] $a_k:=(r_{i_{k-1}}\cdots r_{i_1}\La\mid \al_{i_k}^\vee)\in\Z_{\geq 0}
$ for all $k=1,\dots,t$;
\item[{\rm (ii)}] $v_w:=f_{i_t}^{(a_t)}\cdots f_{i_1}^{(a_1)}v_+$ is a non-zero vector of the weight space $V(\La)_{w\La}$, which does not depend on the choice of a reduced decomposition of $w$;
\item[{\rm (iii)}] $(v_w,v_w)=1$.
\end{enumerate}  
\end{Lemma}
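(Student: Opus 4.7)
The plan is to prove all three parts simultaneously by induction on $t=\ell(w)$, with trivial base case $w=e$ (so $v_w=v_+$). For the inductive step, I would set $w':=r_{i_{t-1}}\cdots r_{i_1}$ and $i:=i_t$, so that $w=r_iw'$ with $\ell(w')=t-1$. By the inductive hypothesis, $v_{w'}$ is a non-zero vector of $V(\La)_{w'\La}$ with $(v_{w'},v_{w'})=1$. The crucial observation is that $\ell(r_iw')>\ell(w')$ forces $w'^{-1}\al_i$ to be a positive real root. Combined with the $W$-invariance of weight multiplicities of the integrable module $V(\La)$, this gives
\[
\dim V(\La)_{w'\La+\al_i}=\dim V(\La)_{\La+w'^{-1}\al_i}=0,
\]
since $w'^{-1}\al_i\in Q_+\setminus\{0\}$ makes $\La+w'^{-1}\al_i$ exceed $\La$ in the dominance order. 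Therefore $e_i v_{w'}=0$.

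With this annihilation in hand, $v_{w'}$ is a highest weight vector for the subalgebra $\mathfrak{sl}_2(i):=\langle e_i,f_i,\al_i^\vee\rangle$, of weight $a_t:=(w'\La\mid\al_i^\vee)$. Integrability of $V(\La)$ forces $a_t\in\Z_{\geq 0}$, which is (i). Standard $\mathfrak{sl}_2$-representation theory then shows that $v_w:=f_i^{(a_t)}v_{w'}$ is non-zero, of weight $w'\La-a_t\al_i=r_iw'\La=w\La$, and satisfies
\[
(v_w,v_w)=(f_i^{(a_t)}v_{w'},f_i^{(a_t)}v_{w'})=\tbinom{a_t}{a_t}(v_{w'},v_{w'})=1,
\]
via the identity $(f_i^{(k)}u,f_i^{(k)}u)=\tbinom{a}{k}(u,u)$ valid for any $\mathfrak{sl}_2$-highest weight vector $u$ of weight $a$ under the Chevalley anti-involution $\sigma$. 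This yields (iii) and the non-vanishing part of (ii).

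For independence of $v_w$ from the reduced decomposition, I would first note that $W$-invariance of weight multiplicities gives $\dim V(\La)_{w\La}=\dim V(\La)_\La=1$, so any two candidate values of $v_w$ differ at most by a sign, which is pinned down by (iii). Ruling out the stray sign is the main subtle point; I would handle it either by verifying invariance directly on each braid move (a brief case analysis in terms of the rank-two subsystems associated to the edges of the Dynkin diagram of $A_{2\ell}^{(2)}$) or, more cleanly, by invoking the well-known fact that on extremal weight vectors the iterated divided powers $f_{i_t}^{(a_t)}\cdots f_{i_1}^{(a_1)}v_+$ realize the Lusztig/Kashiwara braid-group symmetries of $V(\La)$, which are manifestly independent of the reduced expression.
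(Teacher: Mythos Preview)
Your argument is correct. The induction on $\ell(w)$ using the positivity of $w'^{-1}\al_i$, integrability, and the $\mathfrak{sl}_2$-computation $(f_i^{(a)}u,f_i^{(a)}u)=\binom{a}{a}(u,u)$ is the standard way to establish (i), (iii), and the non-vanishing in (ii); the paper simply cites \cite[Lemma~2.4.11]{KlLi} for these.

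The only genuine divergence is in how the residual sign in the independence statement is eliminated. You propose either a braid-move check or an appeal to the Lusztig/Kashiwara braid-group operators $T_i$ on integrable modules (for which $T_iv=f_i^{(a)}v$ on an $i$-extremal vector of $i$-weight $a$, and the $T_i$ satisfy the braid relations). The paper instead resolves the sign via the Kang--Kashiwara--Oh categorification \cite{KKO}: since $V(\La)$ is identified with a Grothendieck group in which the two candidate vectors are classes of actual modules, they cannot be negatives of one another. Your route is more self-contained and stays within representation theory of $\g$; the paper's route is shorter given that categorification is already part of the paper's toolkit, and avoids any case analysis or appeal to braid-group machinery.
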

\begin{proof}
This is well-known. For example, for all the claims, except the independence of a reduced decomposition, one can consult  \cite[Lemma 2.4.11]{KlLi}. One way to see the independence of a reduced decomposition is to first note that (iii) determines $v_w$ uniquely up to a sign, and if $f_{j_t}^{(b_t)}\cdots f_{j_1}^{(b_1)}v_+$ is such vector corresponding to another reduced decomposition, we cannot have $v_w=-f_{j_t}^{(b_t)}\cdots f_{j_1}^{(b_1)}v_+$, since, by the Kang-Kashiwara-Oh categorification \cite{KKO}, the vectors $f_{i_t}^{(a_t)}\cdots f_{i_1}^{(a_1)}v_+$ and $f_{j_t}^{(b_t)}\cdots f_{j_1}^{(b_1)}v_+$ correspond to modules over a certain algebra when $V(\La)$ is identified with a Grothendieck group.
\end{proof}

\subsection{Quantized enveloping algebra}

Let $q$ be an indeterminate and consider the ring $\Z[q,q^{-1}]$ of Laurent polynomials and the field $\C(q)$ of rational functions.  
For $i\in I$ and $n\in\Z_{\geq 0}$, we have the following elements of $\Z[q,q^{-1}]$: 
\begin{equation}\label{EQInt}
q_i:=q^{(\al_i\mid \al_i)/2},\quad [n]_i:=\frac{q_i^{n}-q_i^{-n}}{q_i-q_i^{-1}},\quad [n]_i^!:=[1]_i[2]_i\cdots[n]_i. 
\end{equation}

Let $U_q(\g)$ be the {\em quantized enveloping algebra of type $A_{2\ell}^{(2)}$}, i.e the the associative unital $\C(q)$-algebra with generators $\{E_i,F_i,K_i^{\pm 1}\mid i\in I\}$ subject only to the quantum Serre relations:
\begin{eqnarray*}
T_iE_jT_i^{-1}&=&q^{(\al_i\mid \al_j)}E_j,
\\
T_iF_jT_i^{-1}&=&q^{-(\al_i\mid \al_j)}F_j, 
\\
E_iF_j-F_jE_i&=&\de_{i,j}\frac{T_i-T_i^{-1}}{q_i-q_i^{-1}},
\\
\sum_{r=0}^{1-a_{i,j}}(-1)^rE_i^{(r)}E_jE_i^{(1-a_{i,j}-r)}&=&0\qquad(i\neq j),
\\
\sum_{r=0}^{1-a_{i,j}}(-1)^rF_i^{(r)}F_jF_i^{(1-a_{i,j}-r)}&=&0\qquad(i\neq j),
\end{eqnarray*}
where we have set
$
T_i:=K_i^{(\al_i\mid \al_i)/2},\ E_i^{(r)}:=\frac{E_i}{[r]_i^!},\  F_i^{(r)}:=\frac{F_i}{[r]_i^!}.
$

\vspace{1.5mm}
There is a $\C(q)$-linear anti-involution $\sigma_q:U_q(\g)\to U_q(\g)$ with 
\begin{align*}
\sigma_q\,:\, E_i&\mapsto q_iF_iT_i^{-1}=q_i^{-1}T_i^{-1}F_i,\quad 
F_i\mapsto q_i^{-1}T_iE_i=q_iE_iT_i,
\quad 
T_i\mapsto T_i.
\end{align*}

For $\La\in P_+$, we denote by $V_q(\La)$ the {\em irreducible integrable module}\, for $U_q(\g)$ of highest weight $\La$. We fix a non-zero highest weight vector $v_{+,q}\in V_q(\La)_\La$, so  $E_i v_{+,q}=0$ and $T_iv_{+,q}=q^{(\al_i\mid \La)}v_{+,q}$ for all $i\in I$. 
There is a unique symmetric bilinear form $(\cdot,\cdot)_q$ on $V_q(\La)$ such that  $(v_{+,q},v_{+,q})_q=1$ and 
\begin{equation}\label{EF()q}
(xv,w)_q=(v,\sigma_q(x)w)_q\qquad(x\in U_q(\g),\ v,w\in V_q(\La)),
\end{equation} 
cf. \cite[Appendix D]{KMPY}. We refer to $(\cdot,\cdot)_q$ as the (quantum) Shapovalov form.


\subsection{Putting $q$ to $1$}
\label{SSQ1}
Let $V_q$ be a $U_q(\g)$-module which decomposes as a direct sum of finite dimensional integral weight spaces $V_{q}=\bigoplus_{\mu\in P}V_{q,\mu}$. Suppose also that there exists a full rank $\Z[q,q^{-1}]$-sublattice $V_{q,\mu,\Z[q,q^{-1}]}\subset V_{q,\mu}$ for every $\mu$ such that $V_{q,\Z[q,q^{-1}]}:=\bigoplus_{\mu\in P} V_{q,\mu,\Z[q,q^{-1}]}$ is stable under all $E_i$ and $F_i$. Considering $\C$ as a $\Z[q,q^{-1}]$-module with $q$ acting as $1$, we change scalars to get the complex vector space $V_q\mid_{q=1}:=\C\otimes_{\Z[q,q^{-1}]}V_{q,\Z[q,q^{-1}]}$ with linear operators $e_i:=1\otimes E_i$, $f_i:=1\otimes F_i$ and 
$h_i:=1\otimes \frac{T_i-T_i^{-1}}{q_i-q_i^{-1}}$ for all $i$. These linear operators are easily checked to satisfy the Serre relations for $\g$, cf. \cite[5.13]{JantzenB}, \cite[Theorem 4.12 and \S4.14]{Lusztig}. Thus $V_q\mid_{q=1}$ becomes a $\g$-module. (This module  depends on the choice of the $\Z[q,q^{-1}]$-sublattice).

Moreover, given a symmetric bilinear form $(\cdot,\cdot)_q$ on $V_q$ which satisfies (\ref{EF()q}) and is 
$\Z[q,q^{-1}]$-valued on $V_{q,\Z[q,q^{-1}]}$, we obtain, extending scalars, a symmetric bilinear form $(\cdot,\cdot)_q\mid_{q=1}$ on $V_q\mid_{q=1}$ satisfying (\ref{EF()}). 

The above constructions can be applied to the irreducible modules $V_q(\La)$ with highest weight $\La\in P_+$ by considering the $\Z[q,q^{-1}]$-sublattice spanned by all vectors of the form $F_{i_1}\cdots F_{i_r}v_+$. Taking into account that the formal characters of $V_q(\La)$ and $V(\La)$ agree by \cite[Theorem 4.12 and \S4.14]{Lusztig}, 
this shows that there is a unique isomorphism $V(\La)\iso V_q(\La)\mid_{q=1}$ mapping $v_+$ onto $1\otimes v_{+,q}$. Moreover, identifying $V(\La)$ and $V_q(\La)\mid_{q=1}$ under this isomorphism, the quantum Shapovalov form $(\cdot,\cdot)_q$ yields the usual Shapovalov form $(\cdot,\cdot)$, i.e. 
$(\cdot,\cdot)_q\mid_{q=1}=(\cdot,\cdot)$. In particular:

\begin{Lemma} \label{LShapqShap}
Let $(\cdot,\cdot)_q$ be the Shapovalov form on $V_q(\La)$ and $(\cdot,\cdot)$ be the Shapovalov form on $V(\La)$. Then for all $i_1,\dots,i_n,j_1,\dots,j_n\in I$, we have $(F_{i_1}\cdots F_{i_n}v_{+,q},F_{j_1}\cdots F_{j_n}v_{+,q})_q$ belongs to $\Z[q,q^{-1}]$ and 
$$
(f_{i_1}\cdots f_{i_n}v_+,f_{j_1}\cdots f_{j_n}v_+)=(F_{i_1}\cdots F_{i_n}v_{+,q},F_{j_1}\cdots F_{j_n}v_{+,q})_q\mid_{q=1}.
$$
\end{Lemma}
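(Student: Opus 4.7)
The plan is to reduce the statement to the general specialization framework set up in Subsection~\ref{SSQ1}. Two things must be verified: (a) the integrality claim that $(F_{i_1}\cdots F_{i_n}v_{+,q},F_{j_1}\cdots F_{j_n}v_{+,q})_q\in\Z[q,q^{-1}]$, and (b) the identification of the specialized value with the classical pairing. Part (b) is essentially built into Subsection~\ref{SSQ1}: the isomorphism $V(\La)\iso V_q(\La)\mid_{q=1}$ sends $v_+\mapsto 1\otimes v_{+,q}$ and, by the very definition of the $\g$-action via $f_i=1\otimes F_i$, it sends $f_{i_1}\cdots f_{i_n}v_+$ to $1\otimes(F_{i_1}\cdots F_{i_n}v_{+,q})$; the remark after Lemma~\ref{LWExt} (in fact, the paragraph concluding Subsection~\ref{SSQ1}) identifies the specialized quantum Shapovalov form with the classical one.

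For (a), I would proceed by induction on the total length $n$, the base case $n=0$ being $(v_{+,q},v_{+,q})_q=1$. For the inductive step, I would use (\ref{EF()q}) with $\sigma_q(F_{i_1})=q_{i_1}^{-1}T_{i_1}E_{i_1}$ to move the leftmost factor $F_{i_1}$ to the right-hand argument, obtaining
\[
(F_{i_1}\cdots F_{i_n}v_{+,q},F_{j_1}\cdots F_{j_n}v_{+,q})_q
= q_{i_1}^{-1}\bigl(F_{i_2}\cdots F_{i_n}v_{+,q},\,T_{i_1}E_{i_1}F_{j_1}\cdots F_{j_n}v_{+,q}\bigr)_q.
\]
Then I would repeatedly apply $E_iF_j-F_jE_i=\de_{i,j}\tfrac{T_i-T_i^{-1}}{q_i-q_i^{-1}}$ to push $E_{i_1}$ past the $F_{j_k}$'s; when $E_{i_1}$ reaches $v_{+,q}$ it is annihilated. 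Each commutator contribution produces a Cartan-like operator $\tfrac{T_{i_1}-T_{i_1}^{-1}}{q_{i_1}-q_{i_1}^{-1}}$ acting on a weight vector of some integral weight $\mu\in P$ as the quantum integer $[\,(\al_{i_1}^\vee,\mu)\,]_{i_1}\in\Z[q,q^{-1}]$, while every $T_{i_1}$ that slides past also evaluates on such a weight vector as $q^{(\al_{i_1}\mid\mu)}\in\Z[q,q^{-1}]$. The result is that the original pairing expands as a $\Z[q,q^{-1}]$-linear combination of pairings of the form $(F_{i_2}\cdots F_{i_n}v_{+,q},F_{j_{k_1}'}\cdots F_{j_{k_{n-1}}'}v_{+,q})_q$ with strictly smaller total $F$-degree, to which the induction hypothesis applies.

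Combining (a) with the specialization framework of Subsection~\ref{SSQ1}, the form $(\cdot,\cdot)_q$ is $\Z[q,q^{-1}]$-valued on the integral lattice $V_{q,\Z[q,q^{-1}]}$ and its $q\to 1$ reduction satisfies (\ref{EF()}) with value $1$ on the highest weight vector; uniqueness of the Shapovalov form then gives $(\cdot,\cdot)_q\mid_{q=1}=(\cdot,\cdot)$, from which the displayed equality of the lemma follows.

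The main obstacle is purely bookkeeping in step (a): one must check that every intermediate operator appearing from the quantum Serre commutations yields an element of $\Z[q,q^{-1}]$ on integral weight spaces. This is routine since all integrality-breaking factors that a priori could appear (the $q_i^{\pm 1}$ and $[\,\cdot\,]_i$ contributions) are, by construction, already in $\Z[q,q^{-1}]$; no delicate denominator-clearing is required.
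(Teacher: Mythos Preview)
Your proposal is correct and follows the same route as the paper: the lemma is stated there as an immediate consequence of the specialization framework of \S\ref{SSQ1} (the sentence ``In particular:'' right before the lemma), so part~(b) is exactly what the paper does. The paper, however, does not spell out why the quantum Shapovalov form is $\Z[q,q^{-1}]$-valued on the lattice spanned by the $F_{i_1}\cdots F_{i_r}v_{+,q}$; your inductive argument for part~(a), pushing $E_{i_1}$ through the $F_{j_k}$'s and evaluating the resulting Cartan-type operators on integral weight vectors, supplies precisely the detail the paper omits.
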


Another example of passing from $V_q$ to $V_q\mid_{q=1}$ will be considered in Section~\ref{SFock}.

\section{Combinatorics}

Recall that  we have set $p=2\ell+1$. 

\subsection{Partitions, multipartitions, tableaux}
\label{SSComb}
We denote by $\Par$ the set of all partitions and by $\Par(n)$ the set of all partitions of $n\in\Z_{\geq 0}$. For $\la\in\Par(n)$ we write $|\la|=n$. 
Collecting equal parts of $\la\in\Par$, we  can write it in the form 
\begin{equation}\label{EStrictForm}
\la=(l_1^{m_1},\dots, l_k^{m_k})\ \text{with $l_1>\dots>l_k>0$ and $m_1,\dots,m_k\geq 1$}.
\end{equation} 
We then denote 
\begin{equation}\label{ELaNorm}
\norm{\la}_q\,:=\prod_{r\,\, \text{with\,\,$p| l_r$}}\,\prod_{s=1}^{m_r}(1-(-q^2)^{s}),
\end{equation}
and 
\begin{equation}\label{Ehp}
h(\la)\,:=\sum_{r=1}^k m_r,\quad 
h_p(\la)\,:=\sum_{r\,\, \text{with\,\,$p| l_r$}}\,{m_r}.
\end{equation}
In other words, 
$h(\la)$ is the number of (positive) parts of $\la$ and 
$h_p(\la)$ is the number of (positive) parts of $\la$ divisible by $p$. 
If $m_r>1$ implies $p\mid l_r$ for all $1\leq r\leq k$ then $\la$ is called {\em $p$-strict}. Note that $0$-strict also makes sense and means simply {\em strict}, i.e. all parts are distinct. 
We denote by $\Par_p(n)$ the set of all $p$-strict partitions of $n$,  and let 
$
\Par_p:=\bigsqcup_{n\geq 0}\Par_p(n).
$
We use the similar notation $\Par_0(n)$ and $\Par_0$ for strict partitions. 

For $\la\in\Par_0$ we define its {\em parity}:
\begin{equation}\label{EAla}
\sfp_\la:=
\left\{
\begin{array}{ll}
1 &\hbox{if $\la$ has odd number of positive even parts,}\\
0 &\hbox{otherwise.}
\end{array}
\right.
\end{equation}

Let $\la$ be a $p$-strict partition. As usual, we identify $\la$ with its {\em Young diagram}\index{Young diagram} 
$$\la=\{(r,s)\in\Z_{>0}\times \Z_{>0}\mid s\leq \la_r\}.$$ 
We refer to the element $(r,s)\in\Z_{>0}\times \Z_{>0}$ as the {\em node}\, in row $r$ and column $s$. 
We define a preorder `$\leq$' on the nodes via 
$
(r,s)\leq(r',s')
$
if and only if $s\leq s'$. 
For partitions (equivalently Young diagrams) $\al\subseteq \la$, we define 
\begin{equation}\label{EqAlLa}
q(\la/\al):=|\{r\in\Z_{>0}\mid \la\setminus \al\ \text{has a node in column $r$ but not in column $r+1$}\}|.
\end{equation}

We label the nodes with the elements of the set $I$ as follows: the labeling follows the repeating pattern
$
0,1,\dots,\ell-1,\ell,\ell-1,\dots,1,0,
$
starting from the first column and going to the right, 
see Example~\ref{Ex140821} below. If a node $\ttA\in \la$ is labeled with $i$, we say that $\ttA$ has {\em residue $i$} and write $\Res\, \ttA=i$. 
Recalling $\al_i$'s and $Q_+$ from \S\ref{SSLTN}, define the {\em residue content}\, of $\la$ to be 
$$
\cont(\la):=\sum_{\ttA\in \la}\al_{\Res\, \ttA}\in Q_+.
$$ 
We always write $\cont(\la)=\sum_{i\in I}c^\la_i\al_i$, and 
\begin{equation}\label{Emneq0}
c^\la_{\neq 0}:=c^\la_1+\dots+c^\la_\ell=|\la|-c^\la_0.
\end{equation}

Following \cite{Morris,LT}, we can associate to every $\la\in\Par_p$ its {\em $\bar p$-core} 
$$\core(\la)\in\Par_p$$ 
obtained from $\la$ by removing certain nodes.  It is clear from the definition that the number of nodes removed to go from $\la$ to $\core(\la)$ is divisible by $p$, so the {\em $\bar p$-weight}\, of~$\la$ 
$$
\wt(\la):=(|\la|-|\core(\la)|)/p
$$ 
is a non-negative integer. 
A partition $\rho\in\Par_p$ is called a {\em $\bar p$-core}\, if $\core(\rho)=\rho$. By \cite[Lemma 3.1.39]{KlLi}, we have:

\begin{Lemma} \label{LCorew} 
A $p$-strict partition $\la$ is a $\bar p$-core if and only if $\cont(\la)=\La_0-w\La_0$ for some $w\in W$. 
\end{Lemma}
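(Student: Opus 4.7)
The plan is to prove both implications by induction on $|\la|$, with the base case $\la = \emptyset$ immediate: $\cont(\emptyset) = 0 = \La_0 - 1\cdot\La_0$ and $\emptyset$ is trivially a $\bar p$-core. The main tool is the $\bar p$-abacus on $p = 2\ell+1$ runners (labeled by residues $0,1,\dots,\ell,\ell-1,\dots,1$), on which $\bar p$-cores are precisely the configurations with all beads pushed to the top of each runner.

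For the forward direction, assume $\la$ is a nonempty $\bar p$-core. I would appeal to the standard Morris-type combinatorics of $\bar p$-cores to find $i \in I$ admitting a Scopes-style reduction: sliding beads one position down on the $i$-th runner(s) produces a strictly smaller $\bar p$-core $\mu$ with $\cont(\la) - \cont(\mu) = k\al_i$ for some $k>0$. By induction $\cont(\mu) = \La_0 - w\La_0$ for some $w\in W$. A direct count --- using the normalized invariant form recorded in \S\ref{SSLTN}, and taking special care at $i = \ell$ where the coefficient of $\al_\ell$ in $\delta$ is $1$ rather than $2$ --- shows $k = (w\La_0 \mid \al_i^\vee)$. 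Consequently
\[
\cont(\la) = \La_0 - (w\La_0 - k\al_i) = \La_0 - r_i w \La_0,
\]
so $r_i w \in W$ witnesses the claim.

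For the backward direction, suppose $\cont(\la) = \La_0 - w\La_0$ with $w\neq 1$. Fix a reduced expression $w = r_i w'$ with $\ell(w') < \ell(w)$, and set $a := (w'\La_0 \mid \al_i^\vee)$, which is in $\Z_{\geq 0}$ by Lemma~\ref{LWExt}(i). Then $w\La_0 = w'\La_0 - a\al_i$, so $\cont(\la) = \La_0 - w'\La_0 + a\al_i$. By induction there exists a $\bar p$-core $\mu$ with $\cont(\mu) = \La_0 - w'\La_0$, and I would argue that $\la$ is obtained from $\mu$ by the reverse Scopes move (sliding $a$ beads up by one on the $i$-th runner): on the abacus this configuration is forced by the residue content, since the number of beads and their ``sorted'' position on each runner are determined by $\cont(\la)$. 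Hence $\la$ is itself a $\bar p$-core. The main obstacle is the abacus bookkeeping at the boundary runner $i=\ell$, where the asymmetric Cartan data and the peculiar coefficient of $\al_\ell$ in $\delta$ must be correctly reconciled with the count of $\ell$-residue nodes contributing to $k = (w\La_0 \mid \al_i^\vee)$. This is precisely the content of \cite[Lemma 3.1.39]{KlLi}, which we invoke rather than re-derive.
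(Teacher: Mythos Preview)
The paper does not give a proof of this lemma at all: the statement is simply preceded by the phrase ``By \cite[Lemma 3.1.39]{KlLi}, we have''. Your proposal ultimately falls back on the same reference, so in that narrow sense you match the paper.

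That said, the inductive sketch you offer before invoking the citation has a genuine gap in the backward direction. You take a $p$-strict partition $\la$ with $\cont(\la) = \La_0 - w\La_0$, build (via induction and a reverse Scopes move) a $\bar p$-core with the same content, and then claim $\la$ must equal it because ``the number of beads and their `sorted' position on each runner are determined by $\cont(\la)$''. This is circular: for an arbitrary $p$-strict partition the abacus configuration is \emph{not} determined by its residue content, and the fact that it \emph{is} determined when the content has the special form $\La_0 - w\La_0$ is essentially the statement you are trying to prove. (Equivalently, you are tacitly using that the corresponding Fock-space weight space is one-dimensional --- but in the paper that fact is deduced \emph{from} this lemma in the proof of Lemma~\ref{LCorevw}, not used to prove it.) Note also that \cite[Lemma 3.1.39]{KlLi} is not merely a boundary-runner bookkeeping lemma as you suggest; it is the full statement of Lemma~\ref{LCorew}.

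A clean way to close the backward implication, once the forward one is known, avoids the uniqueness issue altogether. Write $\rho := \core(\la)$ and $d := \wt(\la)$; then $\cont(\la) = \cont(\rho) + d\de$ (this is \cite[Lemma 2.3.9]{KlLi}, quoted in the paper just after \eqref{EBijection0}). By the forward direction $\cont(\rho) = \La_0 - w''\La_0$ for some $w'' \in W$, so $w\La_0 = w''\La_0 - d\de$. Taking norms with the $W$-invariant form and using $(\de\mid\de) = 0$ together with $(\de\mid w''\La_0) = (\de\mid \La_0) \neq 0$ forces $d = 0$, hence $\la = \rho$ is a $\bar p$-core.
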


\vspace{2mm}
\begin{Example}\label{Ex140821}
Let $\ell=2$, so $p=5$. The partition $\la=(16, 11,10,10,9,4,1)$ is $5$-strict. The residues of the nodes are as follows:
\vspace{5mm}
$$
\begin{ytableau}
$0$ & $1$ & $2$ & $1$ & $0$ & $0$& $1$ & $2$ & $1$ & $0$ & $0$ & $1$ & $2$ & $1$ & $0$ & $0$\cr 
$0$ & $1$ & $2$ & $1$ & $0$ & $0$ & $1$ & $2$ & $1$ & $0$ & $0$ \cr
$0$ & $1$ & $2$ & $1$ & $0$ & $0$ & $1$ & $2$ & $1$ & $0$  \cr
$0$ & $1$ & $2$ & $1$ & $0$ & $0$ & $1$ & $2$ & $1$ & $0$  \cr
$0$ & $1$ & $2$ & $1$ & $0$ & $0$ & $1$ & $2$ & $1$ \cr
$0$ & $1$ & $2$ & $1$ \cr
$0$ \cr
\end{ytableau}\vspace{4 mm}
$$
The $\bar 5$-core of $\la$ is $(1)$.
\end{Example}

The partition $\la\in\Par_p$ is determined by its $\bar p$-core $\core(\la)$ and $\bar p$-quotient 
\begin{equation}\label{EPQuot}
\quot(\la)=(\la^{(0)},\dots,\la^{(\ell)})
\end{equation}
 which is an {\em $I$-multipartition}\, of $d$, i.e. $\la^{(0)},\dots,\la^{(\ell)}$ are partitions and $|\la^{(0)}|+|\la^{(1)}|+\dots+|\la^{(\ell)}|=d$. We denote the set of all such multipartitions by $\Par^I(d)$, and set
$$
\Par^I:=\bigsqcup_{d\geq 0}\Par^I(d).
$$
We refer the reader to \cite[p.27]{MorYas} and \cite[\S2.3b]{KlLi} for details on this. 
For a $\bar p$-core partition $\rho$, we denote 
$$
\Par_p(\rho,d):=\{\la\in\Par_p\mid \core(\la)=\rho\ \text{and}\ \wt(\la)=d\}.
$$ 
The map
\begin{equation}\label{EBijection0}
\Par_p(\rho,d)\to \Par^I(d),\ \la\mapsto\cont(\la) 
\end{equation}
is a bijection, see \cite[Theorem 2]{MorYas}. 
The condition $\la\in \Par_p(\rho,d)$ is equivalent to $\cont(\la)=\cont(\rho)+d\de$, cf. \cite[Lemma 2.3.9]{KlLi}.

A multipartition $\ula=(\la^{(0)},\la^{(1)},\dots,\la^{(\ell)})\in\Par^I$ will be called {\em strict}\, if its $0$th component $\la^{(0)}$ is a strict partition (and $\la^{(1)},\dots,\la^{(\ell)}$ are arbitrary partitions). For $d\in\Z_{\geq 0}$, we denote by $\Par_0^I(d)$ the set of all strict multipartitions of $d$. Note that $\quot(\la)\in \Par_0^I(d)$ if and only if $\la\in \Par_0(\rho,d)$, so the bijection  (\ref{EBijection0}) restricts to the bijection
\begin{equation}\label{EBijection}
\Par_0(\rho,d)\to \Par^I_0(d),\ \la\mapsto\cont(\la).
\end{equation}

We identify a multipartition $\ula$ with its Young diagram 
$$\ula=\{(i,r,s)\in I\times \Z_{>0}\times \Z_{>0}\mid s\leq \la^{(i)}_r\}.$$ 
We refer to the element $(i,r,s)\in I\times \Z_{>0}\times \Z_{>0}$ as the {\em node}\, in row $r$ and column $s$ of component $i$. For each $i$, we consider (the Young diagram of) the partition $\la^{(i)}$ as a subset $\la^{(i)}\subseteq \ula$ consisting of the notes of $\ula$ in its $i$th component.

Let $n\in\Z_{>0}$ and $d\in\Z_{\geq 0}$. A {\em composition}\, of $d$ with $n$ parts is a tuple $\mu=(\mu_1,\dots,\mu_n)$ with $\mu_1,\dots,\mu_n\in\Z_{\geq 0}$ such that $\mu_1+\dots+\mu_n=d$. We denote by $\La(n,d)$ the set of all compositions of $d$ with $n$ parts. 
We will need a special composition of $d$ with $d$ parts:
\begin{equation}\label{Eomd}
\om_d:=(1^d)=(1,\dots,1).
\end{equation}
Recall that $J$ denotes $I\setminus\{\ell\}$. 
A {\em colored composition of $d$ with $n$ parts}\, is a pair $(\mu,\bj)$ where $\mu=(\mu_1,\dots,\mu_n)$ is a composition of $d$ with $n$ parts and $\bj=(j_1,\dots,j_n)\in J^n$. We denote by $\La^\col(n,d)$ the set of all colored compositions of $d$ with $n$ parts. 

Let $\ula=(\la^{(0)},\la^{(1)},\dots,\la^{(\ell)})\in\Par^I(d)$ and $(\mu,\bj)\in \La^\col(n,d)$. A {\em colored tableau of shape $\ula$ and type $(\mu,\bj)$} is a function 
$\T:\ula\to \Z_{>0}$ such that

\vspace{1mm}
(1) $\T(i,r,s)\leq \T(i,r,s+1)$ and $\T(i,r,s)\leq \T(i,r+1,s)$ whenever these make sense;

(2) for all $k=1,\dots,n$, we have $|\T^{-1}(\{k\})|=\mu_k$ and $\T^{-1}(\{k\})\subseteq \la^{(j_k)}\sqcup \la^{(j_k+1)}$;

(3) for all $k=1,\dots,n$, no two nodes of $\T^{-1}(\{k\})\cap\la^{(j_k)}$ are in the same column, and no two nodes of $\T^{-1}(\{k\})\cap\la^{(j_k+1)}$ are in the same row.

\vspace{1mm}
We denote by $\CT(\ula;\mu,\bj)$ the set of all colored tableaux of shape $\ula$ and type $(\mu,\bj)$. 
For $\T\in \CT(\ula;\mu,\bj)$ and $1\leq k\leq n$ we denote by $q_k(\T)$ the number of positive integers $r$ such that 
$\T^{-1}(k)\cap\la^{(0)}$ contains a node in column $r$ but not in column $r+1$. We then set 
$
q(\T)=q_1(\T)+\dots+q_n(\T) 
$ 
and denote
\begin{equation}\label{EK}
K(\ula;\mu,\bj):=\sum_{\T\in \CT(\ula;\mu,\bj)}2^{q(\T)}.
\end{equation}

\subsection{Addable and removable nodes}
Let $\la$ be a $p$-strict partition and $i \in I$.
A node $\ttA
\in \la$ is called {\em $i$-removable}\, (for $\la$) if one of the following
holds:
\begin{enumerate}
\item[(R1)] $\Res\, \ttA = i$ and
$\la_\ttA:=\la\setminus \{\ttA\}$\index{$\la_\ttA$} is again a $p$-strict partition; such $\ttA$'s are also called {\em properly $i$-removable};
\index{properly removable node}
\item[(R2)] the node $\ttB$ immediately to the right of $\ttA$
belongs to $\la$,
$\Res\, \ttA = \Res\, \ttB = i$,
and both $\la_\ttB = \la \setminus  \{\ttB\}$ and
$\la_{\ttA,\ttB} := \la \setminus  \{\ttA,\ttB\}$ are $p$-strict partitions.
\end{enumerate}
A node $\ttB\notin\la$ is called 
{\em $i$-addable}\, (for $\la$) if one of the following holds:
\begin{enumerate}
\item[(A1)] $\Res\, \ttB = i$ and
$\la^\ttB:=\la\cup\{\ttB\}$\index{$\la^\ttB$} is again a $p$-strict partition; such $\ttB$'s are also called {\em properly $i$-addable};\index{properly addable node}
\item[(A2)] 
the node $\ttA$
immediately to the left of $\ttB$ does not belong to $\la$,
$\Res\, \ttA = \Res\, \ttB = i$, and both 
$\la^\ttA = \la \cup \{\ttA\}$ and 
$\la^{\ttA, \ttB} := \la \cup\{\ttA,\ttB\}$ are $p$-strict partitions.
\end{enumerate}
We note that (R2) and (A2) above are only possible if $i = 0$.
For $i \in I$, we denote by $\Add_i(\la)$\index{a@$\Add_i(\la)$} (resp. $\Rem_i(\la)$)\index{r@$\Rem_i(\la)$} the set of all $i$-removable (resp. $i$-addable) nodes for $\la$. 
We also denote by $\PA_i(\la)$\index{p@$\PA_i(\la)$} (resp. $\PR_i(\la)$)\index{p@$\PA_i(\la)$}\index{p@$\PR_i(\la)$} the set of all properly $i$-removable (resp. properly $i$-addable) nodes for $\la$.

Let $\la\in\Par_p$ be written in the form (\ref{EStrictForm}). Suppose $\ttA\in \PR_i(\la)$. Then there is $1\leq r\leq k$ such that $\ttA=(m_1+\dots+m_r,l_r)$.  
Recalling the preorder `$\leq$' on the nodes defined above, we set  
\begin{align*}
\eta_\ttA(\la)&:=\sharp\{\ttC\in\Rem_i(\la)\mid \text{$\ttC>\ttA$}\}
-\sharp\{\ttC\in\Add_i(\la)\mid \text{$\ttC>\ttA$}\},
\index{e@$\eta_\ttA(\la)$}
\\
\zeta_\ttA(\la)&:=
\left\{
\begin{array}{ll}
(1-(-q^2)^{m_r}) &\hbox{if $p\mid l_r$,}\\
1 &\hbox{otherwise.}
\end{array}
\right.
\index{z@$\zeta_\ttA(\la)$}
\\
d_\ttA(\la)&:= q_i^{\eta_\ttA(\la)} \zeta_\ttA(\la).
\index{d@$d_\ttA(\la)$}
\end{align*}
Note that
\begin{equation}\label{EDQE1}
d_\ttA(\la)\mid_{q=1}=
\left\{
\begin{array}{ll}
0 &\hbox{if $p\mid l_r$ and $m_r$ is even,}\\
2 &\hbox{if $p\mid l_r$ and $m_r$ is odd,}\\
1  &\hbox{otherwise.}
\end{array}
\right.
\end{equation}

Suppose $\ttB\in\PA_i(\la)$. Then there is $r$ such that $1\leq r\leq k+1$ and $\ttB=(m_1+\dots+m_{r-1}+1,l_r+1)$, where we interpret $l_{k+1}$ as $0$.   We define 
\begin{align}
\label{EEtaDef}
\eta^\ttB(\la)&:=\sharp\{\ttC\in\Add_i(\la)\mid \text{$\ttC<\ttB$}\}
-\sharp\{\ttC\in\Rem_i(\la)\mid \text{$\ttC<\ttB$}\},
\index{e@$\eta^\ttB(\la)$}
\\
\zeta^\ttB(\la)&:=
\left\{
\begin{array}{ll}
(1-(-q^2)^{m_r}) &\hbox{if $r\leq k$ and $p\mid l_r$,}\\
1 &\hbox{otherwise.}
\end{array}
\right.
\index{z@$\zeta^\ttB(\la)$}
\\
d^\ttB(\la)&:= q_i^{\eta^\ttB(\la)} \zeta^\ttB(\la).
\index{d@$d^\ttB(\la)$}
\end{align}

Note that
\begin{equation}\label{EDQF1}
d^\ttB(\la)\mid_{q=1}=
\left\{
\begin{array}{ll}
0 &\hbox{if $r\leq k$, $p\mid l_r$ and $m_r$ is even,}\\
2 &\hbox{if $r\leq k$, $p\mid l_r$ and $m_r$ is odd,}\\
1  &\hbox{otherwise.}
\end{array}
\right.
\end{equation}

\begin{Example} \label{Ex1} 
{\rm 
Let $\ell=2$ so $p=5$.
The partition $\la=(5, 5,2)$ is $5$-strict,
and the residues of its boxes are labeled on the diagram below: 

\vspace{2mm} 
$$
\begin{ytableau}
$0$ & $1$ & $2$ & $1$ & $0$ \cr 
$0$ & $1$ & $2$ & $1$ & $0$\cr
$0$ & $1$\cr
\end{ytableau}\vspace{2 mm}
$$

\vspace{3 mm}
\noindent
The only $0$-removable node is marked as $\ttA_1$, and the $0$-addable nodes are marked as $\ttB_1,\ttB_2$:

\vspace{2mm} 
$$
\ytableausetup{mathmode}
\begin{ytableau}
\, &  &  &  &  & \none[\ttB_2] \cr 
 &  &  &  & \ttA_1 \cr
 & \cr
 \none[\ttB_1]
\end{ytableau}\vspace{2 mm}
$$

\vspace{1mm} 
\noindent
We have $d^{\ttB_1}(\la)=1$ and $d^{\ttB_2}(\la)=(1-q^4)$. On the other hand for the partition $\mu=(5)$ and the node $\ttB=(1,6)$, we have $d^\ttB(\mu)=(1+q^2)$. 
}
\end{Example}

\subsection{Symmetric functions}
We denote by $\La$ the algebra of symmetric functions in the variables $x_1,x_2,\dots$ over $\C$ with  the basis $$\{\pzs_\la\mid \la\in\Par\}$$ of {\em Schur functions}\, and the inner product $\lan\cdot,\cdot\ran$ such that $\lan \pzs_\la,\pzs_\mu\ran=\de_{\la,\mu}$, see \cite{Mac}. We also have the 
{\em monomial symmetric functions} $\pzm_\la$, the 
{\em elementary symmetric functions} $\pze_r=\pzs_{(1^r)}$ and the {\em complete symmetric functions} $\pzh_r=\pzs_{(r)}$ for $r\in \Z_{\geq 0}$. {\em Pieri's Rules}\, \cite[(5.16),(5.17)]{Mac} say that
\begin{equation}\label{EPieri}
\pzs_\la \pzh_r=\sum_\mu \pzs_\mu\quad\text{and}\quad
\pzs_\la \pze_r=\sum_\nu \pzs_\nu
\end{equation}
where the first sum is over all partitions $\mu$ obtained by adding $r$ nodes to $\la$ with no two nodes added in the same column, and 
the second sum is over all partitions $\nu$ obtained by adding $r$ nodes to $\la$ with no two nodes added in the same row.

Suppose that for $s_1,\dots,s_t\in\Z_{\geq 0}$, we have that $\pzf_{s_u}=\pze_{s_u}$ or $\pzh_{s_u}$. 
Under the characteristic map \cite[I.7]{Mac}, the symmetric function $\pzf_{s_1}\cdots \pzf_{s_t}$ corresponds to an induced representation of the symmetric group $\Si_{s_1+\dots+s_t}$ of dimension given by the multinomial coefficient $\binom{s_1+\dots+s_t}{s_1\,\,\cdots\,\, s_t}$, while the symmetric function $\pzs_{(1)}^{\hspace{.2mm}r}$ with $r\in \Z_{\geq 0}$ corresponds to the regular representation of the symmetric group $\Si_r$. Hence we have 
\begin{equation}\label{EPsiReg}
(\pzf_{s_1}\cdots \pzf_{s_t},\pzs_{(1)}^{\hspace{.2mm}r})=
\left\{
\begin{array}{ll}
\binom{r}{s_1\,\cdots\, s_t} &\hbox{if $s_1+\dots+s_t=r$,}\\
0 &\hbox{otherwise.}
\end{array}
\right.
\end{equation}

Denoting by $\pzp_r\in\La$ the $r$th power sum symmetric function, let $\Om$ be the (unital) subalgebra of $\La$ generated by $\pzp_1,\pzp_3,\pzp_5,\dots$. Then $\Om$ has bases 
$$
\{\pzP_\la\mid \la\in\Par_0\}\quad \text{and}\quad \{\pzQ_\la\mid \la\in\Par_0\},
$$ 
where $\pzP_\la$ and $\pzQ_\la$ are {\em Schur's $P$-} and {\em $Q$-symmetric functions}, see \cite[\S6]{St}. We have $\pzP_\la=2^{-h(\la)}\pzQ_\la$ for all $\la\in\Par_0$. Let $[\cdot,\cdot]$ be inner product on $\Om$ such that $[\pzP_\la,\pzQ_\la]=\de_{\la,\mu}$ for all $\la,\mu\in\Par_0$, see \cite[\S\S5,6]{St}.

We also have the symmetric functions 
$$\pzq_r=2\pzP_{(r)}\in\Om\qquad(r\in \Z_{> 0})$$ (and $\pzq_0:=1$), see \cite[(5.3),(6.6)]{St}.
We have the analogue of the Pieri's Rule (which goes back to \cite{MorrisProd} but can be most easily seen from \cite[Theorem 8.3]{St}):
\begin{equation}\label{EPieriq}
\pzP_\la \pzq_r=\sum_\mu 2^{q(\mu/\la)}\pzP_\mu,
\end{equation}
where the sum is over all strict partitions $\mu$ obtained by adding $r$ nodes to $\la$ with no two nodes added in the same column and $q(\mu/\la)$ is as in (\ref{EqAlLa}). 

We note that by \cite[Proposition 5.6(b)]{St}, the inner product $[\pzq_{s_1}\dots \pzq_{s_t},\pzq_1^r]$ is the coefficient of $\pzm_{(s_1,\dots,s_r)}$ in $\pzq_1^r=(2x_1+2x_2+\dots)^r$ (we may assume that $s_1\geq\dots\geq s_r$ so $(s_1,\dots,s_r)$ is a partition), whence 
\begin{equation}\label{EQQ}
[\pzq_{s_1}\dots \pzq_{s_t},\pzq_1^r]=
\left\{
\begin{array}{ll}
2^{r}\binom{r}{s_1\,\cdots\, s_t} &\hbox{if $s_1+\dots+s_t=r$,}\\
0 &\hbox{otherwise.}
\end{array}
\right.
\end{equation}

We consider the algebra 
$$\Sym^I:=\Om\otimes \La^{(1)}\otimes\dots\otimes \La^{(\ell)}
$$
where each algebra $\La^{(i)}$ is just a copy of $\La$. This has bases
\begin{equation}\label{EPi}
\{\pi_\ula:=\pzP_{\la^{(0)}}\otimes \pzs_{\la^{(1)}}\otimes\dots\otimes \pzs_{\la^{(\ell)}}\mid \ula=(\la^{(0)},\dots,\la^{(\ell)})\in\Par^I_0\}
\end{equation}
and 
\begin{equation}\label{EKa}
\{\ka_\ula:=\pzQ_{\la^{(0)}}\otimes \pzs_{\la^{(1)}}\otimes\dots\otimes \pzs_{\la^{(\ell)}}\mid \ula=(\la^{(0)},\dots,\la^{(\ell)})\in\Par^I_0\}
\end{equation}
which are dual with respect to the inner product $(\cdot,\cdot)_{\Sym}$ defined as 
\begin{equation}\label{ESymInnerProd}
(f_0\otimes f_1\otimes\dots\otimes f_\ell,g_0\otimes g_1\otimes\dots\otimes g_\ell)_{\Sym}:=[f_0,g_0]\lan f_1,g_1\ran\cdots\lan f_\ell,g_\ell\ran.
\end{equation}

Let $(\mu,\bj)\in \La^\col(n,d)$. Recalling (\ref{EK}), set  
\begin{equation}\label{EPiMuBj}
\Pi_{\mu,\bj}:=\sum_{\ula\in\Par_0^I(d)}K(\ula;\mu,\bj)\pi_\ula.
\end{equation}

\vspace{1mm}
\begin{Example}
Let $n=1$ so $(\mu,\bj)$ is of the form $((d),j)\in \La^\col(1,d)$. Then 
\begin{equation}\label{EPiMuBjn=1}
\Pi_{(d),j}:=
\left\{
\begin{array}{ll}
1\otimes \pze_{d}\otimes 1^{\otimes \ell-1}+
2\sum_{k=1}^d \pzP_{(k)}\otimes \pze_{d-k}\otimes 1^{\otimes \ell-1} &\hbox{if $j=0$,}\\
\sum_{k=0}^d 1^{\otimes j}\otimes \pzh_{k}\otimes \pze_{d-k}\otimes 1^{\otimes \ell-1-j} &\hbox{if $1\leq j<\ell$.}
\end{array}
\right.
\end{equation}
\end{Example}

\vspace{2mm}
\begin{Lemma} \label{LPiSplit}
Let $(\mu,\bj)\in \La^\col(n,d)$. Suppose $n\geq 2$ and set 
$\nu:=(\mu_1,\dots,\mu_{n-1})$, $\bk:=(j_1,\dots,j_{n-1})$, $m:=\mu_n$, $j:=j_n$. Then 
$$\Pi(\mu,\bj)=\Pi(\nu,\bk)\Pi((m),j).$$ 
\end{Lemma}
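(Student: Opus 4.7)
The plan is to prove the identity by comparing the coefficient of each basis element $\pi_\ula$ on both sides, the right-hand side being analyzed via the classical and Schur $P$-function Pieri rules and the left-hand side via a combinatorial decomposition of colored tableaux.

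First I would establish a natural bijection
$$\CT(\ula;\mu,\bj) \;\longleftrightarrow\; \bigsqcup_{\ubar\mu} \CT(\ubar\mu;\nu,\bk) \times \CT(\ula/\ubar\mu;(m),j),$$
where $\ubar\mu$ runs over sub-multipartitions of $\ula$ with $|\ubar\mu|=d-m$ and $\mu^{(i)}=\la^{(i)}$ for $i\ne j,j+1$, and where $\CT(\ula/\ubar\mu;(m),j)$ denotes the evident skew analogue. The bijection sends $\T$ to $(\T|_{\T^{-1}\{1,\dots,n-1\}},\T|_{\T^{-1}\{n\}})$; condition (1) of a colored tableau forces $\T^{-1}\{1,\dots,n-1\}$ to be downward-closed in each component, hence a multipartition, and strictness of $\la^{(0)}$ passes to $\mu^{(0)}$, so $\ubar\mu\in\Par_0^I(d-m)$. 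Under this bijection the weight splits as $q(\T) = q(\T|_{\ubar\mu}) + q_n(\T)$, where $q_n(\T)$ depends only on $\la^{(0)}/\mu^{(0)}$ and in fact equals the quantity $q(\la^{(0)}/\mu^{(0)})$ from (\ref{EqAlLa}); when $j\geq 1$ this contribution is automatically zero since no label-$n$ node lies in $\la^{(0)}$.

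Next I would expand $\Pi_{\nu,\bk}\Pi_{(m),j}$ using (\ref{EPiMuBjn=1}) and Pieri's rules applied to each $\pi_{\ubar\mu}$ in $\Pi_{\nu,\bk}$. For $j\geq 1$, the ordinary Pieri rule (\ref{EPieri}) applied to the factors $\pzs_{\mu^{(j)}}\pzh_k$ and $\pzs_{\mu^{(j+1)}}\pze_{m-k}$ produces a sum over pairs $(\la^{(j)}/\mu^{(j)},\la^{(j+1)}/\mu^{(j+1)})$ of sizes $(k,m-k)$ with no two added nodes in the same column of the first and no two in the same row of the second, each with multiplicity one. For $j=0$, rewrite
$$\Pi_{(m),0} = \sum_{k=0}^{m} \pzq_k \otimes \pze_{m-k} \otimes 1^{\otimes\ell-1}$$
using $\pzq_0=1$ and $\pzq_k=2\pzP_{(k)}$ for $k\geq 1$, and then apply the Pieri rule (\ref{EPieriq}) for $P$-functions, $\pzP_{\mu^{(0)}}\pzq_k = \sum_{\la^{(0)}} 2^{q(\la^{(0)}/\mu^{(0)})}\pzP_{\la^{(0)}}$, in tandem with the elementary Pieri rule in component~$1$.

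Comparing coefficients of $\pi_\ula$, the expansion shows that the coefficient on the right equals $\sum_{\ubar\mu} K(\ubar\mu;\nu,\bk) \cdot 2^{q(\la^{(0)}/\mu^{(0)})}\cdot \mathbf{1}[\ula/\ubar\mu \text{ has the correct shape}]$, and by the bijection above together with $q(\T) = q(\T|_{\ubar\mu}) + q_n(\T)$, this sum is precisely $K(\ula;\mu,\bj)$, which is the coefficient of $\pi_\ula$ in $\Pi_{\mu,\bj}$. The only genuinely delicate point in the argument is verifying that the Pieri constraint ``no two added nodes in the same column'' for the $\pzq_k$-multiplication lines up exactly with condition (3) of the colored-tableaux definition and that the factor $2^{q(\la^{(0)}/\mu^{(0)})}$ in that Pieri rule reproduces $2^{q_n(\T)}$; both reduce to unwinding the definitions once the bijection in the first paragraph is in place.
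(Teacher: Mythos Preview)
Your proposal is correct and follows essentially the same approach as the paper: expand $\Pi_{\nu,\bk}\Pi_{(m),j}$ using the explicit formula (\ref{EPiMuBjn=1}) and apply the Pieri rules (\ref{EPieri}) and (\ref{EPieriq}) to identify the result with $\Pi_{\mu,\bj}$. The paper simply writes out the expansion in the two cases $j\neq 0$ and $j=0$ and asserts the final equality ``follows from Pieri's Rules''; your version makes the underlying bijection on colored tableaux and the splitting $q(\T)=q(\T|_{\ubar\mu})+q_n(\T)$ explicit, which is exactly the combinatorics the paper leaves to the reader.
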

\begin{proof}
Suppose $j\neq 0$. 
Using (\ref{EPiMuBjn=1}), we see that $\Pi(\nu,\bk)\Pi((m),j)$ equals
\begin{align*}
&\Big(\sum_{\ual\in\Par_0^I(d-m)}K(\ual;\nu,\bk)\pi_\ual\,\Big)\Big(\sum_{k=0}^{m} 1^{\otimes j}\otimes \pzh_{k}\otimes \pze_{m-k}\otimes 1^{\otimes \ell-1-j}\,\Big)
\\
=\,&\sum_{\substack{\ual\in\Par_0^I(d-m)\\ 0\leq k\leq m}}K(\ual;\nu,\bk)\pzP_{\al^{(0)}}\otimes \pzs_{\al^{(1)}}\otimes\dots \otimes \pzs_{\al^{(j)}}\pzh_k\otimes \pzs_{\al^{(j+1)}}\pze_{m-k}\otimes\dots \otimes \pzs_{\al^{(\ell)}}
\\
=\,&\sum_{\ula\in\Par_0^I(d)}K(\ula;\mu,\bj)\pzP_{\la^{(0)}}\otimes \pzs_{\la^{(1)}}\otimes\dots \otimes \pzs_{\la^{(j)}}\otimes \pzs_{\la^{(j+1)}}\otimes\dots \otimes \pzs_{\la^{(\ell)}},
\end{align*}
where the last equality follows from Pieri's Rules (\ref{EPieri}). 

Suppose now that $j= 0$. 
Using (\ref{EPiMuBjn=1}), we see that $\Pi(\nu,\bk)\Pi((m),0)$ equals
\begin{align*}
&\Big(\sum_{\ual\in\Par_0^I(d-m)}K(\ual;\nu,\bk)\pi_\ual\,\Big)\Big(\sum_{k=0}^d \pzq_k\otimes \pze_{d-k}\otimes 1^{\otimes \ell-1}\,\Big)
\\
=\,&\sum_{\substack{\ual\in\Par_0^I(d-m)\\ 0\leq k\leq m}}K(\ual;\nu,\bk)\pzP_{\al^{(0)}}\pzq_k\otimes \pzs_{\al^{(1)}}\pze_{d-k}\otimes \pzs_{\al^{(2)}}\otimes \dots \otimes \pzs_{\al^{(\ell)}}
\\
=\,&\sum_{\ula\in\Par_0^I(d)}K(\ula;\mu,\bj)\pzP_{\la^{(0)}}\otimes \pzs_{\la^{(1)}}\otimes\dots \otimes \pzs_{\la^{(\ell)}},
\end{align*}
where the last equality follows from Pieri's Rules (\ref{EPieriq}) and  (\ref{EPieri}).
\end{proof}

Recalling (\ref{Eomd}), we let 
\begin{equation}\label{}
\Pi_{\om_d}:=\sum_{\bj\in J^d}\Pi_{\om_d,\bj}.
\end{equation}

\begin{Corollary} \label{C1^d}
We have
$$
\Pi_{\om_d}=\sum_{k_0+k_1+\dots+k_\ell=d}2^{k_1+\dots+k_{\ell-1}}{{d}\choose{k_0\,k_1\,\cdots\,k_\ell}}\pzq_1^{\hspace{.2mm}k_0}\otimes \pzs_{(1)}^{\hspace{.2mm}k_1}\otimes\dots\otimes \pzs_{(1)}^{\hspace{.2mm}k_\ell}.
$$
\end{Corollary}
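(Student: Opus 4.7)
The plan is to derive the formula by iterating Lemma~\ref{LPiSplit} and then applying the multinomial theorem.

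First, I would apply Lemma~\ref{LPiSplit} inductively to the composition $\om_d = (1,\dots,1)$ together with any $\bj = (j_1,\dots,j_d)\in J^d$. This gives a factorization
\[
\Pi_{\om_d,\bj} \;=\; \Pi_{(1),j_1}\,\Pi_{(1),j_2}\,\cdots\,\Pi_{(1),j_d}
\]
in the commutative algebra $\Sym^I$. Summing over all $\bj\in J^d$ and using commutativity, I obtain
\[
\Pi_{\om_d} \;=\; \sum_{\bj\in J^d}\prod_{k=1}^d \Pi_{(1),j_k} \;=\; \Bigl(\sum_{j\in J}\Pi_{(1),j}\Bigr)^{\! d}.
\]

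Next, I would compute $\sum_{j\in J}\Pi_{(1),j}$ explicitly from formula (\ref{EPiMuBjn=1}). Writing $\pzs_{(1)}^{(i)}:=1^{\otimes i}\otimes \pzs_{(1)}\otimes 1^{\otimes \ell-i}$ for $1\leq i\leq \ell$ and $\pzq_1^{(0)}:=\pzq_1\otimes 1^{\otimes \ell}$, and using that $\pze_1=\pzh_1=\pzs_{(1)}$ and $2\pzP_{(1)}=\pzq_1$, one finds
\[
\Pi_{(1),0} = \pzq_1^{(0)} + \pzs_{(1)}^{(1)}, \qquad
\Pi_{(1),j} = \pzs_{(1)}^{(j)} + \pzs_{(1)}^{(j+1)} \quad (1\leq j\leq \ell-1).
\]
Telescoping the sum over $j\in J=\{0,1,\dots,\ell-1\}$, each $\pzs_{(1)}^{(i)}$ with $1\leq i\leq \ell-1$ appears twice while $\pzs_{(1)}^{(\ell)}$ and $\pzq_1^{(0)}$ appear once, giving
\[
\sum_{j\in J}\Pi_{(1),j} \;=\; \pzq_1^{(0)} \;+\; \sum_{i=1}^{\ell-1} 2\,\pzs_{(1)}^{(i)} \;+\; \pzs_{(1)}^{(\ell)}.
\]

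Finally, since the $\ell+1$ summands on the right belong to pairwise commuting factors of the tensor product $\Sym^I$, the multinomial theorem yields
\[
\Bigl(\sum_{j\in J}\Pi_{(1),j}\Bigr)^{\!d} \;=\; \sum_{k_0+k_1+\dots+k_\ell=d} \binom{d}{k_0\,k_1\,\cdots\,k_\ell}\, 2^{k_1+\dots+k_{\ell-1}}\, \pzq_1^{\hspace{.2mm}k_0}\otimes \pzs_{(1)}^{\hspace{.2mm}k_1}\otimes\dots\otimes \pzs_{(1)}^{\hspace{.2mm}k_\ell},
\]
which is the desired expression. The proof is essentially a bookkeeping exercise; the only mild subtlety is the asymmetry that makes the multiplicity $2$ appear precisely in the middle components $1,\dots,\ell-1$, which I would want to double-check by treating the $\ell=1$ edge case separately (where the sum $\sum_{j\in J}$ reduces to the single term $j=0$ and the exponent range $k_1+\dots+k_{\ell-1}$ is empty, consistent with no factor of $2$).
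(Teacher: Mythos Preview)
Your proof is correct and follows essentially the same approach as the paper: both rely on Lemma~\ref{LPiSplit} and the explicit computation of $\Pi_{\om_1}=\sum_{j\in J}\Pi_{(1),j}$. The only cosmetic difference is that the paper carries out the argument as an induction on $d$ using the identity $\binom{d}{k_0\,\cdots\,k_\ell}=\sum_{r}\binom{d-1}{k_0\,\cdots\,k_r-1\,\cdots\,k_\ell}$, whereas you factor $\Pi_{\om_d}=\Pi_{\om_1}^{\,d}$ in one step and invoke the multinomial theorem directly---a mild streamlining of the same idea.
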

\begin{proof}
We apply induction on $d$. In the base case $d=1$,
by (\ref{EPiMuBjn=1}) and using $\pzq_1=2\pzP_{(1)}$ we have:
\begin{align*}
\Pi_{\om_1}=\,&\Pi_{(1),0}+\Pi_{(1),1}+\dots+\Pi_{(1),\ell-1}\\
=\,&(1\otimes \pzs_{(1)}\otimes 1^{\otimes \ell-1}+2 \pzP_{(1)}\otimes  1^{\otimes \ell} )
+(1\otimes \pzs_{(1)}\otimes 1^{\otimes \ell-1}+1\otimes 1\otimes \pzs_{(1)}\otimes 1^{\otimes \ell-2})
\\&+\dots+(1^{\otimes \ell-1}\otimes \pzs_{(1)}\otimes 1+1^{\otimes \ell-1}\otimes 1\otimes \pzs_{(1)})
\\
=\,&\pzq_1\otimes 1^{\otimes \ell}+1^{\otimes \ell}\otimes \pzs_{(1)}+
2\sum_{i=1}^{\ell-1}1^{\otimes i}\otimes \pzs_{(1)}\otimes 1^{\otimes\ell-i},
\end{align*}
as required.

For the inductive step, for $d>1$, it follows from  Lemma~\ref{LPiSplit} that $\Pi_{\om_d}=\Pi_{\om_{d-1}}\Pi_{\om_1}$. So, by the inductive assumption, we get
\begin{align*}
\Pi_{\om_d}=\,&\Pi_{\om_{d-1}}\Pi_{\om_1}
\\
=\,
&\Big(\sum_{m_0+m_1+\dots+m_\ell=d-1}2^{m_1+\dots+m_{\ell-1}}{{d-1}\choose{m_0\,m_1\,\cdots\,m_\ell}}\pzq_1^{m_0}\otimes \pzs_{(1)}^{m_1}\otimes\dots\otimes \pzs_{(1)}^{m_\ell}\,\Big)
\\&\times \Big(\sum_{n_0+n_1+\dots+n_\ell=1}2^{n_1+\dots+n_{\ell-1}}\pzq_1^{n_0}\otimes \pzs_{(1)}^{n_1}\otimes\dots\otimes \pzs_{(1)}^{n_\ell}\,\Big)
\\
=\,&\sum_{k_0+k_1+\dots+k_\ell=d}2^{k_1+\dots+k_{\ell-1}}{{d}\choose{k_0\,k_1\,\cdots\,k_\ell}}\pzq_1^{k_0}\otimes \pzs_{(1)}^{k_1}\otimes\dots\otimes \pzs_{(1)}^{k_\ell}
\end{align*}
thanks to the identity ${{d}\choose{k_0\,k_1\,\cdots\,k_\ell}}=\sum_{r\,\,\text{with}\,\,k_r>0}{{d-1}\choose{k_0\,\cdots\, k_{r-1}\,\cdots\,k_r-1\,k_{r+1}\,\cdots k_\ell}}$.
\end{proof}

\subsection{Another description of $\Pi_{\mu,\bj}$}
Let $M_{n,I}$ denote the set of $n\times I$-matrices with non-negative integer entries, i.e.
$$
M_{n,I}=\{(a_{r,i})_{1\leq r\leq n,\, i\in I}\mid a_{r,i}\in\Z_{\geq 0}\}.
$$
For $(\mu,\bj)\in \La^\col(n,d)$, we define the sets of matrices 
\begin{align*}
M_{n,I}(\mu)&:=\{(a_{r,i})\in M_{n,I}\mid \textstyle\sum_{i\in I}a_{r,i}=\mu_r\ \text{for $r=1,\dots,n$}\},
\\
M_{n,I}(\bj)&:=\{(a_{r,i})\in M_{n,I}\mid \text{$a_{r,i}=0$ if $i\neq j_r,j_{r}+1$ for $r=1,\dots, n$}\},
\\
M(\mu,\bj)&:=M_{n,I}(\mu)\cap M_{n,I}(\bj).
\end{align*}
Let $A= (a_{r,i})\in M(\mu,\bj)$. For $1\leq r\leq n$ and $i\in I\setminus\{0\}$, we define
$$
\psi_A(r,i):=
\left\{
\begin{array}{ll}
\pzh_{a_{r,i}} &\hbox{if $i=j_r$,}\\
\pze_{a_{r,i}} &\hbox{if $i=j_{r}+1$,}\\
1 &\hbox{otherwise.}
\end{array}
\right.
$$
We now set 
\begin{align*}
\psi_A^{(0)}:=\pzq_{a_{1,0}}\cdots \pzq_{a_{n,0}},
\qquad
\psi_A^{(i)}:=\psi_A(1,i)\cdots \psi_A(n,i)
\quad(\text{for}\ i\in I\setminus\{0\}),
\end{align*}
and
$$
\psi_A:=\psi_A^{(0)}\otimes \psi_A^{(1)}\otimes\dots\otimes \psi_A^{(\ell)}\in\Sym^I.
$$

\begin{Example} \label{Exn=1}
{\rm 
Suppose $n=1$ and so $(\mu,\bj)\in\La^\col(1,d)$ is of the form $((d),j)$. The set $M((d),j)$ consists of all matrices of the form
$$
\{A(j,k):=\left(
\begin{matrix}
0 & \cdots &0 &k&d-k&0&\cdots & 0
\end{matrix}
\right)\mid 0\leq k\leq d\}
$$
with $k$ in position $j$. Note that by definition we have 
\begin{align*}
\psi_{A_{0,0}}&=\pzq_0\otimes \pze_d\otimes 1^{\otimes \ell-1}=1\otimes \pze_d\otimes 1^{\otimes \ell-1},\\
\psi_{A_{0,k}}&=\pzq_k\otimes \pze_{d-k}\otimes 1^{\otimes \ell-1}=2\pzP_{(k)}\otimes \pze_{d-k}\otimes 1^{\otimes \ell-1}\qquad(1\leq k\leq d),\\
\psi_{A_{j,k}}&=1^{\otimes j}\otimes \pzh_{k}\otimes \pze_{d-k}\otimes 1^{\otimes \ell-1-j}
\qquad(1\leq j<\ell).
\end{align*}
In particular, comparing with (\ref{EPiMuBjn=1}), we deduce that
$\Pi_{(d),j}=\sum_{A\in M((d),j)}\psi_A$. 
}
\end{Example}

\begin{Proposition} \label{PMatrices}
Let $(\mu,\bj)\in \La^\col(n,d)$. Then
$$
\Pi_{\mu,\bj}=\sum_{A\in M(\mu,\bj)}\psi_A.
$$
\end{Proposition}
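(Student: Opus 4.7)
The plan is to argue by induction on $n$, using the multiplicativity from Lemma~\ref{LPiSplit} together with the fact that the statement for $n=1$ is essentially already built into the definitions.

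For the base case $n=1$, a colored composition in $\La^\col(1,d)$ has the form $((d),j)$, and the set $M((d),j)$ consists precisely of the matrices $A(j,k)$ with $a_{1,j}=k$ and $a_{1,j+1}=d-k$ for $0\leq k\leq d$ (all other entries zero). Evaluating the definition of $\psi_A$ row by row, as spelled out in Example~\ref{Exn=1}, yields exactly the summands listed in (\ref{EPiMuBjn=1}); so $\Pi_{(d),j}=\sum_{A\in M((d),j)}\psi_A$.

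For the inductive step, assume $n\geq 2$ and set $\nu=(\mu_1,\dots,\mu_{n-1})$, $\bk=(j_1,\dots,j_{n-1})$, $m=\mu_n$, $j=j_n$. By the inductive hypothesis and the base case,
\[
\Pi_{\nu,\bk}=\sum_{B\in M(\nu,\bk)}\psi_B,\qquad \Pi_{(m),j}=\sum_{C\in M((m),j)}\psi_C,
\]
and Lemma~\ref{LPiSplit} gives $\Pi_{\mu,\bj}=\Pi_{\nu,\bk}\,\Pi_{(m),j}$. There is an obvious bijection
\[
M(\nu,\bk)\times M((m),j)\;\bijection\; M(\mu,\bj),\qquad (B,C)\mapsto A,
\]
where $A$ is obtained by stacking the single row $C$ beneath the rows of $B$: the row-sum condition matches because $\mu=(\nu_1,\dots,\nu_{n-1},m)$, and the support condition matches because the support restriction on the last row of $A$ is exactly the support restriction on $C$. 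It remains to check that under this bijection $\psi_B\,\psi_C=\psi_A$. This is immediate from the definitions: for the $0$-component,
\[
\psi_B^{(0)}\psi_C^{(0)}=\pzq_{b_{1,0}}\cdots\pzq_{b_{n-1,0}}\,\pzq_{c_{1,0}}=\pzq_{a_{1,0}}\cdots\pzq_{a_{n,0}}=\psi_A^{(0)},
\]
and for each $i\in I\setminus\{0\}$,
\[
\psi_B^{(i)}\psi_C^{(i)}=\psi_B(1,i)\cdots\psi_B(n-1,i)\,\psi_C(1,i)=\psi_A(1,i)\cdots\psi_A(n,i)=\psi_A^{(i)}.
\]
Summing over $(B,C)$ therefore gives $\Pi_{\mu,\bj}=\sum_{A\in M(\mu,\bj)}\psi_A$, completing the induction.

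There is no real obstacle here: Lemma~\ref{LPiSplit} already supplies the multiplicative decomposition, and the functions $\psi_A$ are by design a tensor product of row-wise monomials, so the product $\psi_B\psi_C$ automatically matches the stacked matrix $A$. The only point requiring a moment's care is that $M(\mu,\bj)$ is genuinely the Cartesian product $M(\nu,\bk)\times M((m),j)$ under the stacking map, which is clear once one observes that the row $r$ support constraint in $M(\mu,\bj)$ depends only on $(\mu_r,j_r)$.
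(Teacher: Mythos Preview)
Your proof is correct and follows essentially the same approach as the paper: induction on $n$, with the base case handled by Example~\ref{Exn=1} and the inductive step via Lemma~\ref{LPiSplit}. The only difference is that you spell out the stacking bijection $M(\nu,\bk)\times M((m),j)\iso M(\mu,\bj)$ and the identity $\psi_B\psi_C=\psi_A$ explicitly, whereas the paper simply asserts this ``comes from the definitions.''
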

\begin{proof}
We apply induction on $n$. For the base $n=1$, see Example~\ref{Exn=1}. Suppose $n\geq 2$ and set 
$$\nu:=(\mu_1,\dots,\mu_{n-1}),\quad \bk:=(j_1,\dots,j_{n-1}),\quad m:=\mu_n,\quad j:=j_n.
$$ 
Then 
$\Pi(\mu,\bj)=\Pi(\nu,\bk)\Pi((m),j)$ by Lemma~\ref{LPiSplit}. By the inductive assumption, we have 
$$
\Pi(\nu,\bk)=\sum_{B\in M(\nu,\bk)}\psi_B
\quad\text{and}\quad
\Pi((m),j)=\sum_{C\in M((m),j)}\psi_C,
$$
so it suffices to observe  that 
$$\Big(\sum_{B\in M(\nu,\bk)}\psi_B
\Big)\Big(\sum_{C\in M((m),j)}\psi_C\Big)=\sum_{A\in M(\mu,\bj)}\psi_A,$$
which comes from the definitions.  
\end{proof}

\subsection{Computing the inner product $(\Pi_{\mu,\bj},\Pi_{\om_d})_\Sym$}
Recall the inner product $(\cdot,\cdot)_\Sym$ from  (\ref{ESymInnerProd}). 
Throughout this subsection, we fix $(\mu,\bj)\in\La^\col(n,d)$. For  $A= (a_{r,i})\in M(\mu,\bj)$ and $i\in I$, we define 
$$
|a_{*,i}|:=\sum_{r=1}^n a_{r,i}.
$$
Then we have compositions 
$$
a_{*,i}:=(a_{1,i},\dots,a_{n,i})\in\La(n,|a_{*,i}|)\qquad(i\in I)
$$
and multinomial coefficients 
$$
\binom{|a_{*,i}|}{a_{*,i}}:=\binom{|a_{*,i}|}{a_{1,i}\,\cdots\, a_{n,i}}=\frac{|a_{*,i}|!}{a_{1,i}!\cdots a_{n,i}!}. 
$$

\begin{Lemma} \label{LIndividualInner}
Let $A\in M(\mu,\bj)$, and $k_0,k_1,\dots,k_\ell\in\Z_{\geq 0}$. Then
$$
(\psi_A,\pzq_1^{k_0}\otimes \pzs_{(1)}^{k_1}\otimes\dots\otimes \pzs_{(1)}^{k_\ell})_\Sym=
\left\{
\begin{array}{ll}
2^{|a_{*,0}|}\prod_{i\in I}\binom{|a_{*,i}|}{a_{*,i}} &\hbox{if $|a_{*,i}|=k_i$ for all $i\in I$,}\\
0 &\hbox{otherwise.}
\end{array}
\right.
$$
\end{Lemma}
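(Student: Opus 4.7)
The plan is to reduce the Sym inner product to a product of inner products on the tensor factors via the definition (\ref{ESymInnerProd}), and then evaluate each factor separately using the previously established formulas (\ref{EQQ}) and (\ref{EPsiReg}).

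More precisely, I would first write
\[
(\psi_A, \pzq_1^{k_0} \otimes \pzs_{(1)}^{k_1} \otimes \cdots \otimes \pzs_{(1)}^{k_\ell})_\Sym = [\psi_A^{(0)}, \pzq_1^{k_0}] \cdot \prod_{i=1}^{\ell} \langle \psi_A^{(i)}, \pzs_{(1)}^{k_i} \rangle,
\]
which is immediate from (\ref{ESymInnerProd}). The vanishing part of the claim will then follow factor-by-factor: each individual pairing is zero unless the total degree of $\psi_A^{(i)}$ equals $k_i$, which is exactly the condition $|a_{*,i}|=k_i$.

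Next, I would handle the $i=0$ factor. Since $\psi_A^{(0)}=\pzq_{a_{1,0}}\cdots \pzq_{a_{n,0}}$, formula (\ref{EQQ}) applied with $(s_1,\dots,s_t)=(a_{1,0},\dots,a_{n,0})$ and $r=k_0$ gives
\[
[\psi_A^{(0)}, \pzq_1^{k_0}] = 2^{k_0}\binom{k_0}{a_{*,0}}
\]
when $|a_{*,0}|=k_0$, and $0$ otherwise. This provides the $2^{|a_{*,0}|}\binom{|a_{*,0}|}{a_{*,0}}$ part of the claimed expression.

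For the remaining factors $i=1,\dots,\ell$, each $\psi_A(r,i)$ is by definition either $\pzh_{a_{r,i}}$, $\pze_{a_{r,i}}$, or $1=\pze_0=\pzh_0$, so $\psi_A^{(i)}$ is a product of elementary and complete symmetric functions of total degree $|a_{*,i}|$. Formula (\ref{EPsiReg}) then yields
\[
\langle \psi_A^{(i)}, \pzs_{(1)}^{k_i}\rangle = \binom{k_i}{a_{*,i}}
\]
when $|a_{*,i}|=k_i$ and $0$ otherwise; the zero entries of $a_{*,i}$ contribute trivial factors of $1$ and do not alter the multinomial coefficient. Multiplying across all $i\in I$ gives exactly $2^{|a_{*,0}|}\prod_{i\in I}\binom{|a_{*,i}|}{a_{*,i}}$, with the vanishing condition being precisely $|a_{*,i}|=k_i$ for all $i$. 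The argument is essentially bookkeeping — no real obstacle arises, since the difficult analytic content is entirely bundled inside the already-quoted identities (\ref{EQQ}) and (\ref{EPsiReg}).
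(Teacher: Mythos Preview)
Your proposal is correct and follows essentially the same approach as the paper: factor the $\Sym$ inner product via (\ref{ESymInnerProd}), then evaluate the $i=0$ factor using (\ref{EQQ}) and the $i\geq 1$ factors using (\ref{EPsiReg}). Your explicit remark that $1=\pze_0=\pzh_0$ so that the ``otherwise'' entries of $\psi_A(r,i)$ fit into the framework of (\ref{EPsiReg}) is a helpful clarification that the paper leaves implicit.
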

\begin{proof}
We have
\begin{align*}
(\psi_A,\pzq_1^{k_0}\otimes \pzs_{(1)}^{k_1}\otimes\dots\otimes \pzs_{(1)}^{k_\ell})_\Sym&=
(\psi_A^{(0)}\otimes \psi_A^{(1)}\otimes\dots\otimes \psi_A^{(\ell)},\pzq_1^{k_0}\otimes \pzs_{(1)}^{k_1}\otimes\dots\otimes \pzs_{(1)}^{k_\ell})_\Sym
\\
&=[\psi_A^{(0)},\pzq_1^{k_0}]\,\,(\psi_A^{(1)},\pzs_{(1)}^{k_1})\,\cdots \,(\psi_A^{(\ell)},\pzs_{(1)}^{k_\ell}).
\end{align*}
Now, by (\ref{EQQ}), we have 
$$
[\psi_A^{(0)},\pzq_1^{k_0}]=[\pzq_{a_{1,0}}\cdots \pzq_{a_{n,0}},\pzq_1^{k_0}]
=
\left\{
\begin{array}{ll}
2^{k_0}\binom{k_0}{a_{1,0}\,\cdots\, a_{n,0}}&\hbox{if $k_0=a_{1,0}+\cdots+ a_{n,0}$,}\\
0 &\hbox{otherwise.}
\end{array}
\right.
$$
On the other hand, by (\ref{EPsiReg}), we have for $i=1,\dots,\ell$:
$$
(\psi_A^{(i)},\pzs_{(1)}^{k_i}))
=(\psi_A(1,i)\cdots\psi_A(n,i),\pzs_{(1)}^{k_i}))
\left\{
\begin{array}{ll}
\binom{k_i}{a_{1,i}\,\cdots\, a_{n,i}}&\hbox{if $k_i=a_{1,i}+\cdots+ a_{n,i}$,}\\
0 &\hbox{otherwise.}
\end{array}
\right.
$$
This implies the required equality. 
\end{proof}

Recall the notation $|\mu,\bj|_{\ell-1}$ from (\ref{EmMuj}). 

\begin{Theorem} \label{TMainInner}
Let $(\mu,\bj)\in\La^\col(n,d)$. Then 
$$
\big(\Pi_{\mu,\bj},\Pi_{\om_d}\big)_\Sym=\binom{d}{\mu_1\,\cdots\,\mu_n}\,4^{d-|\mu,\bj|_{\ell-1}}\,3^{|\mu,\bj|_{\ell-1}}.
$$
\end{Theorem}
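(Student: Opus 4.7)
The plan is to expand both sides of the pairing using the combinatorial descriptions already established. By Proposition~\ref{PMatrices}, $\Pi_{\mu,\bj}=\sum_{A\in M(\mu,\bj)}\psi_A$, and by Corollary~\ref{C1^d}, $\Pi_{\om_d}$ is an explicit sum over tuples $(k_0,\dots,k_\ell)$ with $\sum_i k_i=d$. Inserting both expansions into $(\Pi_{\mu,\bj},\Pi_{\om_d})_\Sym$ and applying Lemma~\ref{LIndividualInner}, I observe that for each $A\in M(\mu,\bj)$ only a single tuple $(k_0,\dots,k_\ell)$ contributes, namely $k_i=|a_{*,i}|$ (which automatically sums to $d$). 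This reduces the double sum to a single sum over $A$.

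Next, I would simplify the resulting expression using the elementary multinomial identity
\[
\binom{d}{|a_{*,0}|\,\cdots\,|a_{*,\ell}|}\prod_{i\in I}\binom{|a_{*,i}|}{a_{*,i}}=\binom{d}{\mu_1\,\cdots\,\mu_n}\prod_{r=1}^n\binom{\mu_r}{a_{r,0}\,\cdots\,a_{r,\ell}},
\]
both sides equaling $d!/\prod_{r,i}a_{r,i}!$. This repackaging pulls out the desired factor $\binom{d}{\mu_1\,\cdots\,\mu_n}$ and, crucially, converts the sum over matrices $A\in M(\mu,\bj)$ into one that factorizes as a product of independent row-sums, since the matrix constraints are row-wise.

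The key observation is that the power of $2$ appearing, $2^{|a_{*,0}|+\cdots+|a_{*,\ell-1}|}=2^{d-|a_{*,\ell}|}$, also factors as $\prod_r 2^{\mu_r-a_{r,\ell}}$. Thus each row $r$ contributes a separate sum depending only on $j_r$ and $\mu_r$. When $j_r<\ell-1$ we must have $a_{r,\ell}=0$, so the row yields $2^{\mu_r}\sum_{k=0}^{\mu_r}\binom{\mu_r}{k}=4^{\mu_r}$; when $j_r=\ell-1$, writing $k=a_{r,\ell-1}$, the row yields $\sum_{k=0}^{\mu_r}2^k\binom{\mu_r}{k}=3^{\mu_r}$ by the binomial theorem. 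Multiplying these row contributions produces exactly $4^{d-|\mu,\bj|_{\ell-1}}\cdot 3^{|\mu,\bj|_{\ell-1}}$.

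The proof is essentially a bookkeeping exercise; there is no real obstacle once the expansions of Proposition~\ref{PMatrices} and Corollary~\ref{C1^d} are paired via Lemma~\ref{LIndividualInner}. The only step requiring attention is the multinomial identity that swaps the roles of rows and columns in the double product—this is what turns a global sum over matrices into a product of local row-sums and makes the final answer collapse to the clean formula $4^{d-|\mu,\bj|_{\ell-1}}\,3^{|\mu,\bj|_{\ell-1}}$ with the distinguished role of the color $\ell-1$ coming purely from the fact that the $\ell$-th tensor factor of $\Pi_{\om_d}$ carries no factor of $2$ in Corollary~\ref{C1^d}.
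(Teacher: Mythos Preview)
Your proposal is correct and follows essentially the same approach as the paper: expand via Proposition~\ref{PMatrices} and Corollary~\ref{C1^d}, collapse the double sum with Lemma~\ref{LIndividualInner}, apply the multinomial identity to swap row and column groupings, and then evaluate the resulting product of row-sums using $\sum_k\binom{m}{k}=2^m$ and $\sum_k 2^k\binom{m}{k}=3^m$. The only cosmetic difference is that the paper first permutes the parts so that equal colors are grouped together before factorizing, whereas you observe directly that the sum factorizes row-by-row without this preliminary rearrangement; your version is slightly more streamlined but otherwise identical.
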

\begin{proof}
In view of 
Proposition~\ref{PMatrices} and 
Corollary~\ref{C1^d}, we have that $\big(\Pi_{\mu,\bj},\Pi_{\om_d}\big)_\Sym$ equals
\begin{align*}
&\sum_{A\in M(\mu,\bj)}\sum_{k_0+k_1+\dots+k_\ell=d}2^{k_1+\dots+k_{\ell-1}}{{d}\choose{k_0\,k_1\,\cdots\,k_\ell}}\big(\psi_A,\pzq_1^{k_0}\otimes \pzs_{(1)}^{k_1}\otimes\dots\otimes \pzs_{(1)}^{k_\ell}\big)_\Sym.
\end{align*}
By Lemma~\ref{LIndividualInner}, this equals 
\begin{align*}
&\sum_{A\in M(\mu,\bj)}2^{|a_{*,1}|+\dots+|a_{*,\ell-1}|}{{d}\choose{|a_{*,0}|\,|a_{*,1}|\,\cdots\,|a_{*,\ell}|}}2^{|a_{*,0}|}\prod_{i\in I}\binom{|a_{*,i}|}{a_{*,i}}
\\
=\,&\sum_{A\in M(\mu,\bj)}2^{|a_{*,0}|+|a_{*,1}|+\dots+|a_{*,\ell-1}|}\frac{d!}{\prod_{i\in I}\prod_{r=1}^na_{r,i}!}
\\
=\,&\sum_{A\in M(\mu,\bj)}2^{d-|a_{*,\ell}|}\binom{d}{\mu_1\,\cdots\,\mu_n}
\prod_{r=1}^n\binom{\mu_r}{a_{r,0}\,\cdots a_{r,\ell}},
\end{align*}
and it remains to prove that
\begin{equation}\label{EToProve}
\sum_{A\in M(\mu,\bj)}2^{d-|a_{*,\ell}|}\prod_{r=1}^n\binom{\mu_r}{a_{r,0}\,\cdots a_{r,\ell}}=4^{d-|\mu,\bj|_{\ell-1}}\,3^{|\mu,\bj|_{\ell-1}}.
\end{equation}

Denote $$
d_{j}:=\sum_{\substack{1\leq r\leq n,\\ j_r=j}}\mu_r\qquad(j\in J).
$$
In particular, $d_{\ell-1}=|\mu,\bj|_{\ell-1}$ and $d_0+d_1+\dots+d_{\ell-1}=d$. 
Note that permuting the parts of $(\mu_1,\dots,\mu_n)$ and $(j_1,\dots,j_n)$ by the same permutation in $\Si_n$ does not change the left hand side of (\ref{EToProve}), so we may assume without loss of generality that $\bj=(0^{n_0},1^{n_1},\dots,(\ell-1)^{n_{\ell-1}})$ with $n_0+n_1+\dots+n_{\ell-1}=n$ and 
$$
\mu=(\la^{(0)}_1,\dots,\la^{(0)}_{n_0},\dots,\la^{(\ell-1)}_1,\dots,\la^{(\ell-1)}_{n_{\ell-1}})
$$ 
with $(\la^{(j)}_1,\dots,\la^{(j)}_{n_j})\in\La(n_j,d_j)$ for all $j\in J$. Then, the matrices $A\in M(\mu,\bj)$ look like 
$$
A=
\left(
\begin{matrix}
 B^{(0)}   \\
 \vdots \\
 B^{(\ell-1)} 
\end{matrix}
\right)
$$
where, for each $j$, the matrix $B^{(j)}=(b^{(j)}_{r,i})_{1\leq r\leq n_j,\,i\in I}$ is an arbitrary matrix with non-negative integer values with the property that $b^{(j)}_{r,i}=0$ unless $i\in\{j,j+1\}$ and $b^{(j)}_{r,j}+b^{(j)}_{r,j+1}=\la^{(j)}_r$ for all $r=1,\dots,n_j$. 
So, the left hand side of (\ref{EToProve}) 
equals $XY$ where
$$
X:=\prod_{j=0}^{\ell-2}\prod_{r=1}^{n_j}\sum_{b^{(j)}_{r,j}+b^{(j)}_{r,j+1}=\la^{(j)}_r}2^{\la^{(j)}_r}\binom{\la^{(j)}_r}{b^{(j)}_{r,j}\,b^{(j)}_{r,j+1}}
$$
and 
$$
Y:=\prod_{r=1}^{n_{\ell-1}}\sum_{b^{(\ell-1)}_{r,\ell-1}+b^{(\ell-1)}_{r,\ell}=\la^{(\ell-1)}_r}2^{b^{(\ell-1)}_{r,\ell-1}}\binom{\la^{(\ell-1)}_r}{b^{(\ell-1)}_{r,\ell-1}\,b^{(\ell-1)}_{r,\ell}}.
$$
Now, using the formula 
$\sum_{a+b=c}\binom{c}{a\,b}=2^c$, 
we get  
$$
X=2^{d_0+\dots+d_{\ell-2}}
\prod_{j=0}^{\ell-2}\prod_{r=1}^{n_j}\sum_{b^{(j)}_{r,j}+b^{(j)}_{r,j+1}=\la^{(j)}_r}2^{\la^{(j)}_r}
=4^{d_0+\dots+d_{\ell-2}}
=4^{d-d_{\ell-1}}
=4^{d-|\mu,\bj|_{\ell-1}},
$$
and, using the formula 
$\sum_{a+b=c}2^a\binom{c}{a\,b}=3^c$, 
we get  
$$
Y=\prod_{r=1}^{n_{\ell-1}}3^{\la^{(\ell-1)}_r}=3^{d_{\ell-1}}=3^{|\mu,\bj|_{\ell-1}},
$$
completing the proof.
\end{proof}

\section{Fock space}
\label{SFock}

\subsection{Fock spaces $\Fock_q$ and $\Fock$}
The  {\em ($q$-deformed) level\, $1$ Fock space} $\Fock_q$, as defined in \cite{KMPY} (see also \cite{LT}), is the $\Q(q)$ vector space with basis $\{u_{\la}\mid \la\in \Par_p\}$ labeled by the $p$-strict partitions:
$$
\Fock_q:=\bigoplus_{\la\in\Par_p} \Q(q)\cdot  u_{\la}.
$$

There is a structure of a $U_q(\g)$-module on $\Fock_q$ such that 
\begin{eqnarray}\label{EActionFockE}
E_iu_\la&=&\sum_{\ttA\in\PR_i(\la)}d_\ttA(\la)u_{\la_\ttA},\\
\label{EActionFockF}
F_iu_\la&=&\sum_{\ttB\in\PA_i(\la)}d^\ttB(\la)u_{\la^\ttB},
\\
\label{EActionFockT}
T_iu_\la&=&
q^{(\al_i|\La_0-\cont(\la))}u_\la.
\end{eqnarray}

\begin{Example} 
{\rm 
In the set up of Example~\ref{Ex1} we have 
$$F_0u_{(5,5,2)}=(1-q^4)u_{(6,5,2)}+u_{(5,5,2,1)}.$$
}
\end{Example}

Moreover, as established in \cite[Appendix D]{KMPY}, there is a bilinear form $(\cdot,\cdot)$\index{$(\cdot,\cdot)_q$} on $\Fock_q$ which satisfies
\begin{equation}\label{EOrth}
(u_\la,u_\mu)_q=\de_{\la,\mu}\norm{\la}_q
\end{equation}
and 
$$(xv,w)_q=(v,\sigma_q(x)w)_q$$ for all $x\in U_q(\g)$ and $v,w\in\Fock_q$. The following well-known result allows us to identify $V_q(\La_0)$ with the submodule of $\Fock_q$ generated by $u_\varnothing$, where $\varnothing$ stands for the partition $(0)$ of $0$, cf. \cite[Lemma 2.4.20]{KlLi}.

\begin{Lemma} \label{LForm}  
There is a unique isomorphism of $U_q(\g)$-modules $V_q(\La_0)\iso U_q(\g)\cdot u_\varnothing$ mapping $ v_{+,q}$ onto $u_\varnothing$.  Moreover, identifying $V_q(\La_0)$ with the submodule $U_q(\g)\cdot u_{\varnothing}\subseteq \Fock_q$ via this isomorphism, the Shapovalov form   $(\cdot,\cdot)_q$ on $V_q(\La_0)$ is the restriction of the form $(\cdot,\cdot)_q$ on $\Fock_q$ to $V_q(\La_0)$.
\end{Lemma}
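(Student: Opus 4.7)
The plan is to break the lemma into three steps: first identify $u_\varnothing$ as a highest weight vector of weight $\La_0$, then upgrade the cyclic submodule it generates to an irreducible integrable module (which must be $V_q(\La_0)$), and finally compare the two bilinear forms via the uniqueness part of the defining property of the Shapovalov form.

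First I would verify directly from the action formulas that $u_\varnothing$ is a highest weight vector of weight $\La_0$. Since $\PR_i(\varnothing)=\emptyset$, formula \eqref{EActionFockE} gives $E_i u_\varnothing = 0$ for all $i\in I$. Moreover, $\cont(\varnothing)=0$, so \eqref{EActionFockT} yields $T_i u_\varnothing = q^{(\al_i\mid \La_0)} u_\varnothing$. Hence the cyclic submodule $U_q(\g)\cdot u_\varnothing\subseteq \Fock_q$ is a highest weight module of highest weight $\La_0$, and it receives a canonical surjection from the Verma module $M(\La_0)$.

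Next I would show this submodule is in fact isomorphic to $V_q(\La_0)$ by an integrability argument. The Fock module $\Fock_q$ is integrable: formulas \eqref{EActionFockE} and \eqref{EActionFockF} show that for each fixed $\la$ the set of $i$-addable and $i$-removable nodes is finite, and iteration of $E_i$ (resp.\ $F_i$) eventually exhausts all removable (resp.\ addable) nodes of residue $i$, yielding zero. Therefore any submodule, in particular $U_q(\g)\cdot u_\varnothing$, is integrable. By the standard classification of integrable highest weight modules for $U_q(\g)$ with dominant integral highest weight, such a module must be irreducible, hence isomorphic to $V_q(\La_0)$; the isomorphism sending $v_{+,q}$ to $u_\varnothing$ is unique because any $U_q(\g)$-linear map out of the cyclic module $V_q(\La_0)$ is determined by the image of its generator $v_{+,q}$.

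For the statement about the forms, transport the restriction of the form $(\cdot,\cdot)_q$ on $\Fock_q$ to $V_q(\La_0)$ via this isomorphism. From \eqref{EOrth} and the convention that $\norm{\varnothing}_q$ is an empty product, $(u_\varnothing,u_\varnothing)_q=1$. The $\sigma_q$-invariance condition \eqref{EF()q} for the restricted form is inherited from the same identity on all of $\Fock_q$, since $V_q(\La_0)$ is a $U_q(\g)$-submodule. Thus both the Shapovalov form and the restricted Fock form are symmetric bilinear forms on $V_q(\La_0)$ normalised to $1$ on the highest weight vector and satisfying \eqref{EF()q}. The uniqueness clause stated just before \eqref{EF()q} then forces them to coincide.

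The only step that is not purely bookkeeping is the assertion that $U_q(\g)\cdot u_\varnothing$ is irreducible; everything else is immediate from the definitions. I expect this to be the main (and only) point of substance, but the two ingredients needed---integrability of $\Fock_q$ and the irreducibility of integrable highest weight modules with dominant integral highest weight---are both standard, so the proof should be short.
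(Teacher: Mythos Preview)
Your argument is correct and is precisely the standard one. The paper itself does not give a proof of this lemma: it simply records the result as ``well-known'' and refers to \cite[Lemma~2.4.20]{KlLi}. Your three-step outline (highest weight vector, integrability $\Rightarrow$ irreducibility, uniqueness of the contravariant form) is exactly how such a statement is established.

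One small point worth tightening: your sentence ``iteration of $F_i$ eventually exhausts all addable nodes of residue $i$, yielding zero'' is a little loose, since adding an $i$-node can create new $i$-addable nodes elsewhere. What you actually need is that any chain $\la=\la_0\subset\la_1\subset\cdots$ obtained by successively adding a single $i$-node terminates; equivalently, that each $F_i$ is locally nilpotent on $\Fock_q$. This is part of what is proved in \cite{KMPY} when they construct $\Fock_q$, so it is legitimate to invoke it as known, but the phrasing should make clear you are citing integrability of $\Fock_q$ rather than deducing it from the finiteness of $\PA_i(\la)$ for a single $\la$. With that caveat, your proof is complete and matches the intended argument.
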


We now apply the construction of \S\ref{SSQ1} to go from the $U_q(\g)$-module $\Fock_q$ to the $\g$-module $\Fock_q\mid_{q=1}=\C\otimes_{\Z[q,q^{-1}]}\Fock_{q,\Z[q,q^{-1}]}$ with the lattice $\Fock_{q,\Z[q,q^{-1}]}:=\bigoplus_{\la\in\Par_p} \Z[q,q^{-1}]\cdot u_\la$. We will denote $1\otimes u_\la \in \C\otimes_{\Z[q,q^{-1}]}\Fock_{q,\Z[q,q^{-1}]}$ again by $u_\la$. So we have a $\g$-module
$$
\Fock_q\mid_{q=1}=\bigoplus_{\la\in\Par_p}\C\cdot u_\la
$$
with the action 
\begin{eqnarray}\label{EActionFockEDeg}
e_iu_\la=\sum_{\ttA\in\PR_i(\la)}(d_\ttA(\la)\mid_{q=1})u_{\la_\ttA},\quad
f_iu_\la=\sum_{\ttB\in\PA_i(\la)}(d^\ttB(\la)\mid_{q=1})u_{\la^\ttB}, 
\end{eqnarray}
and the form $(\cdot,\cdot):=(\cdot,\cdot)_q\mid_{q=1}$ which satisfies  
$$
(u_\la,u_\mu)=\de_{\la,\mu}\norm{\la}_q\mid_{q=1} 
$$
and $(xv,w)=(v,\sigma(x)w)$ for all $x\in \g$ and $v,w\in\Fock_q\mid_{q=1}$.

Recalling (\ref{EDQE1}) and (\ref{EDQF1}), it is easy to see that  ${\mathscr R}:=\spa_\C(u_\la\mid\la\in\Par_p\setminus\Par_0)$ is a $\g$-submodule of $\Fock_q\mid_{q=1}$. Consider the {\em reduced Fock space}
$$
\Fock:=(\Fock_q\mid_{q=1})/{\mathscr R}.
$$
Denoting $u_\la+{\mathscr R}\in (\Fock_q\mid_{q=1})/{\mathscr R}$ by $u_\la$ again, we have that 
$$
\Fock=\bigoplus_{\la\in\Par_0}\C\cdot u_\la
$$
with the action of the Chevalley generators $f_i$ given by: 
\begin{eqnarray}\label{EActionFockEDegRed}
f_iu_\la=\sum_{\ttB\in A_i(\la)}c(\la,\ttB)u_{\la^\ttB},
\end{eqnarray}
where 
$$
A_i(\la):=\{\ttB\in \PA_i(\la)\mid \la^\ttB\in\Par_0\}
$$
and, recalling (\ref{Ehp}), 
\begin{equation}\label{EclaB}
c(\la,\ttB):=
\left\{
\begin{array}{ll}
2 &\hbox{if $h_p(\la^\ttB)=h_p(\la)-1$,}\\
1 &\hbox{otherwise.}
\end{array}
\right.
\end{equation}
(We are not going to need the action of the Chevalley generators $e_i$.)
Moreover, $\Fock$ inherits the form  $(\cdot,\cdot)$ which satisfies  
\begin{equation}\label{EFormDegFock}
(u_\la,u_\mu)=\de_{\la,\mu}2^{h_p(\la)}. 
\end{equation}
and $(xv,w)=(v,\sigma(x)w)$ for all $x\in \g$ and $v,w\in\Fock$. We now have from Lemmas~\ref{LForm} and \ref{LShapqShap}:

\begin{Lemma} \label{LFormDeg} 
There is a unique isomorphism of $\g$-modules from $V(\La_0)$ to the submodule $U(\g)\cdot u_{\varnothing}\subseteq \Fock$  generated by $ u_\varnothing$,  mapping $ v_{+}$ onto $u_\varnothing$.  Moreover, identifying $V(\La_0)$ with $U(\g)\cdot u_{\varnothing}\subseteq \Fock$ via this isomorphism, the Shapovalov form   $(\cdot,\cdot)$ on $V(\La_0)$ is the restriction of the form $(\cdot,\cdot)$ on $\Fock$ to $V(\La_0)$.
\end{Lemma}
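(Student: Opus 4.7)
The plan is to combine Lemma~\ref{LForm} with the specialization-at-$q=1$ machinery of \S\ref{SSQ1} (cf.\ Lemma~\ref{LShapqShap}) and then descend to the quotient $\Fock=(\Fock_q\mid_{q=1})/\mathscr R$.

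First I would apply the construction of \S\ref{SSQ1} to $V_q(\La_0)$ using the $\Z[q,q^{-1}]$-lattice spanned by the vectors $F_{i_1}^{(a_1)}\cdots F_{i_r}^{(a_r)}u_\varnothing$. By Lemma~\ref{LShapqShap}, this yields a canonical $\g$-module isomorphism $V(\La_0)\iso V_q(\La_0)\mid_{q=1}$ sending $v_+\mapsto 1\otimes u_\varnothing$ and carrying the Shapovalov form on $V(\La_0)$ to $(\cdot,\cdot)_q\mid_{q=1}$. Using Lemma~\ref{LForm}, the inclusion $V_q(\La_0)\hookrightarrow\Fock_q$ carries the above lattice into $\Fock_{q,\Z[q,q^{-1}]}=\bigoplus_{\la\in\Par_p}\Z[q,q^{-1}]\cdot u_\la$ (whose stability under $E_i,F_i$ is immediate from (\ref{EActionFockE})--(\ref{EActionFockF})), so specialization at $q=1$ produces a $\g$-equivariant, form-preserving map $\iota_0\colon V(\La_0)\to\Fock_q\mid_{q=1}$ with $\iota_0(v_+)=u_\varnothing$.

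Next I would compose with the projection $\pi\colon\Fock_q\mid_{q=1}\twoheadrightarrow\Fock$ to obtain a $\g$-equivariant map $\iota:=\pi\circ\iota_0\colon V(\La_0)\to\Fock$ with $\iota(v_+)=u_\varnothing$. Its kernel is a proper submodule of the irreducible module $V(\La_0)$ (proper since $u_\varnothing\neq 0$ in $\Fock$), hence is zero, so $\iota$ is an isomorphism onto $U(\g)\cdot u_\varnothing$. Uniqueness of an isomorphism sending $v_+\mapsto u_\varnothing$ is automatic since $v_+$ generates $V(\La_0)$.

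The remaining point is to verify that the form $(\cdot,\cdot)_q\mid_{q=1}$ on $\Fock_q\mid_{q=1}$ genuinely descends to the form $(\cdot,\cdot)$ on $\Fock$ defined by (\ref{EFormDegFock}); equivalently, $\mathscr R$ must lie in the radical of $(\cdot,\cdot)_q\mid_{q=1}$. By (\ref{EOrth}) the basis $\{u_\la\}$ is orthogonal with $(u_\la,u_\la)_q=\norm{\la}_q$, so it suffices to show $\norm{\la}_q\mid_{q=1}=0$ whenever $\la\in\Par_p\setminus\Par_0$. But any such $\la$ has a part $l_r$ with $p\mid l_r$ and multiplicity $m_r\geq 2$, so the $s=2$ factor $1-(-q^2)^2=1-q^4$ appears in (\ref{ELaNorm}) and vanishes at $q=1$, as required. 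Granting this, the restriction of the descended form on $\Fock$ to the image $U(\g)\cdot u_\varnothing$ matches the Shapovalov form on $V(\La_0)$ because $\iota_0$ was form-preserving. The one genuinely technical point is this radical computation; everything else is a direct bookkeeping application of the constructions of \S\ref{SSQ1}.
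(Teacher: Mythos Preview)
Your argument is correct and is essentially the same approach as the paper's. The paper's proof of Lemma~\ref{LFormDeg} is a single sentence citing Lemmas~\ref{LForm} and~\ref{LShapqShap}; you have simply unpacked what this means, including the radical check (that $\norm{\la}_q\mid_{q=1}=0$ for $\la\in\Par_p\setminus\Par_0$) which the paper implicitly uses in asserting that $\Fock$ ``inherits'' the form. One minor discrepancy: the lattice in \S\ref{SSQ1} is the span of monomials $F_{i_1}\cdots F_{i_r}v_{+,q}$ rather than divided-power monomials, but this does not affect your argument.
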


Let $\rho$ be a $\bar p$-core. By definition, we have $h_p(\rho)=0$ and $\rho\in\Par_0$. Note that by Lemma~\ref{LCorew}, there is $w\in W$ such that $\cont(\rho)=\La_0-w\La_0$. We have the element $v_w\in V(\La_0)$ defined in Lemma~\ref{LWExt}, and the element $u_\rho\in\Fock$. The following lemma shows that these agree:

\begin{Lemma} \label{LCorevw}
Let $\iota:V(\La_0)\iso U(\g)\cdot u_{\varnothing},\ v_{+}\mapsto u_\varnothing$ be the isomorphism of Lemma~\ref{LFormDeg}. 
If $\rho$ is a $\bar p$-core and $w\in W$ is such that $\cont(\rho)=\La_0-w\La_0$ then $\iota(v_w)=u_\rho$. 
\end{Lemma}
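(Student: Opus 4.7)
The plan is to prove the lemma by induction on the length $\ell(w)$. In the base case $w=1$, we have $\rho=\varnothing$, $v_1=v_+$, and $\iota(v_+)=u_\varnothing$ by construction of $\iota$, so the claim holds.

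For the inductive step, fix a reduced decomposition $w=r_{i_t}\cdots r_{i_1}$ with $t=\ell(w)\geq 1$, set $w':=r_{i_{t-1}}\cdots r_{i_1}$ and $i:=i_t$, and let $\rho'\in\Par_0$ be the $\bar p$-core with $\cont(\rho')=\La_0-w'\La_0$. Setting $a:=(w'\La_0\mid\al_i^\vee)\geq 0$, the definition of $v_w$ gives $v_w=f_i^{(a)}v_{w'}$, and by the inductive hypothesis $\iota(v_{w'})=u_{\rho'}$. Since $\iota$ is a homomorphism of $\g$-modules, the task reduces to showing
$$
f_i^{(a)}u_{\rho'}=u_\rho\quad\text{ in }\Fock.
$$

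To compute the left-hand side, I first identify the ambient weight space. The weight space $\Fock_{w\La_0}$ has basis $\{u_\mu\mid\mu\in\Par_0,\ \cont(\mu)=\cont(\rho)\}$. Writing $\cont(\rho)=\cont(\sigma)+d\de$ for some $\bar p$-core $\sigma$ and $d\in\Z_{\geq 0}$ forces $\sigma=\rho$ and $d=0$ (since $\rho$ itself is a core, and the core is uniquely determined by the content, by Lemma~\ref{LCorew}), so the bijection~(\ref{EBijection}) gives $\Par_0(\rho,0)=\{\rho\}$ and hence $\Fock_{w\La_0}=\C u_\rho$. Thus $f_i^{(a)}u_{\rho'}=c\,u_\rho$ for some $c\in\C$. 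Two independent inputs then pin down $c$. First, $\iota$ is an isometry by Lemma~\ref{LFormDeg}, so $c^2(u_\rho,u_\rho)=(v_w,v_w)=1$ by Lemma~\ref{LWExt}(iii); since a strict $\bar p$-core contains no part divisible by $p$ (see below), we have $h_p(\rho)=0$ and thus $(u_\rho,u_\rho)=2^{h_p(\rho)}=1$ by~(\ref{EFormDegFock}), yielding $c=\pm1$. Second, the explicit action~(\ref{EActionFockEDegRed}) has all structure constants $c(\lambda,\ttB)\in\{1,2\}\subset\Z_{\geq 0}$, so iterating $f_i$ shows $f_i^a u_{\rho'}$ is a non-negative integer combination of basis vectors, and hence $f_i^{(a)}u_{\rho'}=(1/a!)\,f_i^a u_{\rho'}$ has non-negative rational coefficient of $u_\rho$. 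Combined with $c^2=1$, this forces $c=1$.

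The main obstacle I anticipate is the auxiliary combinatorial fact that $h_p(\rho)=0$ for a strict $\bar p$-core $\rho$, which I expect to handle by a short direct argument: a part of $\rho$ equal to some $kp$ with $k\geq 1$ would permit the removal of a $\bar p$-bar of length $p$ from that part while preserving strictness, contradicting $\rho=\core(\rho)$. Beyond this combinatorial aside, the argument is a clean reduction of the identity $\iota(v_w)=u_\rho$ to the explicit Fock-space action via the one-dimensionality of extremal weight spaces, with the sign resolved by the manifest non-negativity of the structure constants.
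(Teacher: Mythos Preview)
Your proof is correct and follows essentially the same approach as the paper: both arguments use that the extremal weight space $\Fock_{w\La_0}$ is one-dimensional (spanned by $u_\rho$), that the Shapovalov form satisfies $(u_\rho,u_\rho)=1$ since $h_p(\rho)=0$, and that the structure constants of the $f_i$-action on the reduced Fock space are non-negative, so the scalar must be $+1$ rather than $-1$. The only cosmetic difference is that you organize the computation as an induction on $\ell(w)$, passing from $u_{\rho'}$ to $u_\rho$ one simple reflection at a time, whereas the paper applies the full divided-power monomial $f_{i_t}^{(a_t)}\cdots f_{i_1}^{(a_1)}$ to $u_\varnothing$ in one stroke; the underlying logic is identical.
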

\begin{proof}
By Lemma~\ref{LWExt}, for certain $a_1,\dots,a_l$, we have 
$$\iota(v_w)=\iota(F_{i_l}^{(a_l)}\cdots F_{i_1}^{(a_1)}v_+)
=F_{i_l}^{(a_l)}\cdots F_{i_1}^{(a_1)}\iota(v_+)=F_{i_l}^{(a_l)}\cdots F_{i_1}^{(a_1)}u_\varnothing.
$$
It follows from Lemma~\ref{LCorew}, that $\Fock_{w\La_0}=\Fock_{\La_0-\cont(\rho)}$ is $1$-dimensional and hence spanned by $u_\rho$. Now, it follows from the formulas (\ref{EActionFockEDegRed}) and (\ref{EclaB}) that $F_{i_l}^{(a_l)}\cdots F_{i_1}^{(a_1)}u_\varnothing=ku_\rho$ for $k\in\Z_{>0}$. Now, $(u_\rho,u_\rho)=2^{h_p(\rho)}=1$. On the other hand, 
$$(F_{i_l}^{(a_l)}\cdots F_{i_1}^{(a_1)}u_\varnothing,F_{i_l}^{(a_l)}\cdots F_{i_1}^{(a_1)}u_\varnothing)=(\iota(v_w),\iota(v_w))=(v_w,v_w)=1
$$ 
by Lemmas~\ref{LFormDeg} and \ref{LWExt}(iii). So $k=1$. 
\end{proof}

\subsection{The elements $\chi_\la$}
In this subsection we introduce some new basis of $\Fock$. 
Recalling (\ref{Ehp}), (\ref{EAla}) and (\ref{Emneq0}), we consider the following rescalings of the basis vectors $u_\la$:
\begin{equation}\label{EChi}
\chi_\la:=2^{(\sfp_\la-h_p(\la)-c^\la_{\neq 0})/2}u_\la\qquad(\la\in\Par_0).
\end{equation}
These elements correspond to the irreducible supercharacters of the double covers of symmetric groups, see \cite[\S5]{FKM}, and the formula (\ref{EChi}) was communicated to us by M. Fayers. 

Set
$$
a(\la,\ttB):=
\left\{
\begin{array}{ll}
2 &\hbox{if $\sfp_\la=1$ and $\sfp_{\la^\ttB}=0$,}\\
1 &\hbox{otherwise.}
\end{array}
\right.
$$

\begin{Lemma}\label{LFChi}
Let $i\in I$ and $\la\in\Par_0$. Then 
$f_i\chi_\la=\sum_{\ttB\in A_i(\la)}a(\la,\ttB)u_{\la^\ttB}.$
\end{Lemma}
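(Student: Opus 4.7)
The plan is to verify this lemma by a direct computation starting from the definition \eqref{EChi} of $\chi_\la$ and the formula \eqref{EActionFockEDegRed} for $f_i u_\la$. Substituting the definition of $\chi_\la$ gives
\[
f_i\chi_\la = 2^{(\sfp_\la-h_p(\la)-c^\la_{\neq 0})/2}\sum_{\ttB\in A_i(\la)}c(\la,\ttB)\,u_{\la^\ttB}.
\]
Rewriting each $u_{\la^\ttB}$ on the right hand side in terms of $\chi_{\la^\ttB}$ via \eqref{EChi}, the lemma reduces to checking, for each individual $\ttB\in A_i(\la)$, a single numerical identity of the form
\[
c(\la,\ttB)\cdot 2^{(\Delta_\sfp-\Delta_h-\Delta_c)/2}=a(\la,\ttB),
\]
where $\Delta_\sfp:=\sfp_\la-\sfp_{\la^\ttB}$, $\Delta_h:=h_p(\la)-h_p(\la^\ttB)$ and $\Delta_c:=c^\la_{\neq 0}-c^{\la^\ttB}_{\neq 0}$. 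Thus the whole lemma boils down to computing these three increments from the position of the added node.

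The next step is a case analysis based on the residue $i$ and the column $c$ of the node $\ttB$. Since $\ttB\in A_i(\la)\subseteq\PA_i(\la)$ is an A1--type addable node whose result is a strict partition, $\ttB$ extends a row from length $c-1$ to $c$, or creates a new row of length $1$ when $c=1$. The key combinatorial input for type $A_{2\ell}^{(2)}$ is that a column $c$ has residue $0$ precisely when $c\equiv 0$ or $c\equiv 1\pmod p$. From this I would read off: $\Delta_c=0$ if $i=0$ and $\Delta_c=-1$ if $i\neq 0$; $\Delta_h=+1$ when $c\equiv 1\pmod p$ with $c>1$ (an old part divisible by $p$ is replaced by one that is not), $\Delta_h=-1$ when $c\equiv 0\pmod p$, and $\Delta_h=0$ otherwise; finally $\Delta_\sfp=0$ when $c=1$ (the added part $1$ is odd), while $\Delta_\sfp=\pm 1$ otherwise because the consecutive integers $c-1$ and $c$ have opposite parities, so the count of positive even parts changes by exactly one.

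Equipped with these formulas I would simply walk through the four cases ($i\neq 0$; $i=0$ with $c=1$; $i=0$ with $c=kp$, $k\geq 1$; $i=0$ with $c=kp+1$, $k\geq 1$), compute the factor $c(\la,\ttB)\cdot 2^{(\Delta_\sfp-\Delta_h-\Delta_c)/2}$, using the defining rule $c(\la,\ttB)=2\iff\Delta_h=1$, and verify that the result equals $2$ exactly when $\sfp_\la=1$ and $\sfp_{\la^\ttB}=0$, and equals $1$ otherwise, which is precisely $a(\la,\ttB)$. In each of the four cases the arithmetic collapses to a single-line check: for example, for $i\neq 0$ one has $c(\la,\ttB)=1$, $\Delta_h=0$, $\Delta_c=-1$, so the factor equals $2^{(\Delta_\sfp+1)/2}$, which is $2$ iff $\Delta_\sfp=1$ iff $\sfp_\la=1$ (equivalently $\sfp_{\la^\ttB}=0$ since $\sfp$ flips), and $1$ otherwise.

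The main obstacle is the case $i=0$, where both the $p$-divisibility and the parity of the integer $c$ have to be tracked simultaneously, and where the subcases $c\equiv 0$ and $c\equiv 1\pmod p$ behave asymmetrically (only the latter makes $c(\la,\ttB)=2$). Once the residue-to-column dictionary above is in hand, however, the verification is a short and essentially mechanical arithmetic check in each subcase.
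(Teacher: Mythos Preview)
Your proposal is correct and follows essentially the same approach as the paper's proof: both reduce the lemma to the single-node identity $c(\la,\ttB)\cdot 2^{(\Delta_\sfp-\Delta_h-\Delta_c)/2}=a(\la,\ttB)$ (the paper's equation (\ref{EReq})) and then verify it by the same case analysis, distinguishing $i\neq 0$ from $i=0$ and, in the latter case, splitting according to whether the added node starts a new row or extends a row of length $\equiv 0$ or $\equiv -1\pmod p$. Your parametrization by the column $c$ of $\ttB$ is just a cosmetic repackaging of the paper's parametrization by $l_r$.
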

\begin{proof}
We have 
\begin{align*}
f_i\chi_\la&=2^{(\sfp_\la-h_p(\la)-c^\la_{\neq 0})/2}f_iu_\la
\\
&=2^{(\sfp_\la-h_p(\la)-c^\la_{\neq 0})/2}\sum_{\ttB\in A_i(\la)}c(\la,\ttB)u_{\la^\ttB}
\\
&=2^{(\sfp_\la-h_p(\la)-c^\la_{\neq 0})/2}\sum_{\ttB\in A_i(\la)}c(\la,\ttB)2^{-(\sfp_{\la^\ttB}-h_p(\la^\ttB)-c^{\la^\ttB}_{\neq 0})/2}\chi_{\la^\ttB},
\end{align*}
so we need to prove that 
\begin{equation}\label{EReq}
a(\la,\ttB)=2^{(\sfp_\la-\sfp_{\la^\ttB}-h_p(\la)+h_p(\la^\ttB)-c^\la_{\neq 0}+c^{\la^\ttB}_{\neq 0})/2}c(\la,\ttB).
\end{equation}

If $i\neq 0$ then either $\sfp_\la=1$ and $\sfp_{\la^\ttB}=0$, or $\sfp_\la=0$ and $\sfp_{\la^\ttB}=1$. Moreover, $c^{\la^\ttB}_{\neq 0}=c^{\la}_{\neq 0}+1$. Hence 
\begin{align*}
2^{(\sfp_\la-\sfp_{\la^\ttB}-h_p(\la)+h_p(\la^\ttB)-c^\la_{\neq 0}+c^{\la^\ttB}_{\neq 0})/2}c(\la,\ttB)&=
2^{(\sfp_\la-\sfp_{\la^\ttB}+1)/2}
\\
&=\left\{
\begin{array}{ll}
2 &\hbox{if $\sfp_\la=1$ and $\sfp_{\la^\ttB}=0$,}\\
1 &\hbox{otherwise.}
\end{array}
\right.
\end{align*}
which is $a(\la,B)$ as required. 

On the other hand, if $i=0$ then $c^\la_{\neq 0}=c^{\la^\ttB}_{\neq 0}$.  Let $\la$ be written in the form (\ref{EStrictForm}), and consider the following three cases:

\vspace{1mm}
(1) $B=(m_1+\dots+m_k+1,1)$. In this case, we have $c(\la,B)=1$,  $\sfp_\la=\sfp_{\la^\ttB}$ and $h_p(\la)=h_p(\la^\ttB)$, which immediately gives (\ref{EReq}). 

\vspace{1mm}
(2) $B=(m_1+\dots+m_{r-1}+1,1)$ for some $1\leq r\leq k$ and $p\mid l_r$. In this case, we have $h_p(\la^\ttB)=h_p(\la)-1$, 
$c(\la,B)=2$ and $\sfp_{\la^\ttB}\neq \sfp_\la$. If $\sfp_{\la^\ttB}=1$ and $\sfp_\la=0$ then both sides of (\ref{EReq}) equal $1$. 
If $\sfp_{\la^\ttB}=0$ and $\sfp_\la=1$ then both sides of (\ref{EReq}) equal $2$. 

\vspace{1mm}
(3) $B=(m_1+\dots+m_{r-1}+1,1)$ for some $1\leq r\leq k$ and $p\nmid l_r$. 
In this case, we have $h_p(\la^\ttB)=h_p(\la)+1$, 
$c(\la,B)=1$ and $\sfp_{\la^\ttB}\neq \sfp_\la$. If $\sfp_{\la^\ttB}=1$ and $\sfp_\la=0$ then both sides of (\ref{EReq}) equal $1$. 
If $\sfp_{\la^\ttB}=0$ and $\sfp_\la=1$ then both sides of (\ref{EReq}) equal $2$. 
\end{proof}

\section{Shapovalov form for RoCK weights}
\label{S4}

Suppose that $\theta\in Q_+$ satisfies $V(\La_0)_{\La_0-\theta}\neq 0$. 
Then $\theta=\La_0-w\La_0+d\de$ for some $w$ in the Weyl group $W$, and unique $d\in \Z_{\geq 0}$. 
We say that $\theta$ is {\em RoCK}\, if $(\theta\mid \al_0^\vee)\geq 2d$ and $(\theta\mid \al_i^\vee)\geq d-1$ for $i=1,\dots,\ell$. 
This is equivalent to the cyclotomic quiver Hecke superalgebra $R_\theta^{\La_0}$ being a {\em RoCK block}, as defined in \cite[Section 4.1]{KlLi}. 

Throughout Section~\ref{S4}, we fix a RoCK weight $\theta\in Q_+$, so that $\theta=\La_0-w\La_0+d\de$ for some $w\in W$ and $d\in \Z_{\geq 0}$, and $(\theta\mid \al_0^\vee)\geq 2d$, $(\theta\mid \al_i^\vee)\geq d-1$ for $i=1,\dots,\ell$. 

We have the element $v_w\in V(\La_0)_{w\La_0}$ defined in Lemma~\ref{LWExt}. 

\subsection{Computation of $f(\mu,\bj)u_\rho$}
For each $m\in \Z_{\geq 0}$, $j\in J$, $(\mu,\bj)\in \La^\col(n,d)$,  recall the divided power monomials $f(m,j)$ and 
$f(\mu,\bj)$ defined in (\ref{EFmj}) and (\ref{EFmubj}). We also have a sum $f(\om_d)$ of monomials  defined in (\ref{Efom}). 

Let $\mu\in\Par_0(\rho,c)$ with $c\leq d$. Recall the notion of a $p$-quotient $\quot(\mu)=(\mu^{(0)},\dots,\mu^{(\ell)})$ of $\mu$ from (\ref{EPQuot}) and the notation (\ref{EqAlLa}). Recall that $\quot(\mu)\in\Par^I_0(c)$ since $\mu$ is strict. The following result follows immediately from \cite[Proposition 6.6]{FKM}, \cite[(5.1)]{FKM}, and Lemma~\ref{LFChi}. 

\begin{Lemma} \label{LMatt} 
Let $j\in J$ and $c,k\in\Z_{\geq 0}$ satisfy $c+k\leq d$. For $\al\in\Par_0(\rho,c)$, 
 in the reduced Fock space $\Fock$, we have 
$$
f(k,j)\chi_\al=\sum_\la 2^{q(\la^{(0)}/\al^{(0)})+(k(2\ell-1)+h(\al^{(0)})-h(\la^{(0)})+\sfp_\al-\sfp_\la)/2}\chi_\la
$$
where the sum is over all $\la\in\Par_0(\rho,c+k)$ such that 
$\quot(\la)=(\la^{(0)},\dots,\la^{(\ell)})$ is obtained from $\quot(\al)=(\al^{(0)},\dots,\al^{(\ell)})$ 
by adding $k$ nodes to the components $\al^{(j)}$ and $\al^{(j+1)}$, with no two nodes added in the same column of $\al^{(j)}$ or in the same row of $\al^{(j+1)}$.
\end{Lemma}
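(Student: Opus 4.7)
The lemma is flagged as an immediate consequence of \cite[Proposition~6.6]{FKM}, \cite[(5.1)]{FKM}, and Lemma~\ref{LFChi}, so the work is largely bookkeeping to match conventions. First I would unpack the monomial $f(k,j)$ defined in (\ref{EFmj}): reading right to left, it applies $f_\ell^{(k)}$, then the sequence $f_{\ell-1}^{(2k)},\dots,f_{j+1}^{(2k)}$, then $f_j^{(k)},\dots,f_1^{(k)}$, then $f_0^{(2k)}$, and finally $f_1^{(k)},\dots,f_j^{(k)}$. This specific multi-index is precisely the one appearing in \cite[Proposition~6.6]{FKM}, where the action of such a monomial on $\chi_\al$ in the reduced Fock space is computed directly. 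That proposition exhibits the result as a sum over $\la$ whose $\bar p$-quotient is obtained from $\quot(\al)$ by adding $k$ nodes distributed across the $j$th and $(j+1)$st components, subject to the column restriction on $\al^{(j)}$ and the row restriction on $\al^{(j+1)}$ stated in the lemma. These restrictions mirror the $\pzh$- and $\pze$-Pieri rules (\ref{EPieri}), while the factor $2^{q(\la^{(0)}/\al^{(0)})}$ arises from the Schur $P$-function Pieri rule (\ref{EPieriq}) applied implicitly on the $0$th quotient component.

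Next, to match the remaining exponent $(k(2\ell-1)+h(\al^{(0)})-h(\la^{(0)})+\sfp_\al-\sfp_\la)/2$, I would convert between the $u$-basis (in which the action of the Chevalley generators is given by (\ref{EActionFockEDegRed}) and \cite[(5.1)]{FKM} is naturally stated) and the $\chi$-basis via the rescaling (\ref{EChi}). Since $f(k,j)$ has weight $-k\de$ and $\de=2\al_0+2\al_1+\dots+2\al_{\ell-1}+\al_\ell$, one has $c^\la_{\ne 0}-c^\al_{\ne 0}=k(2\ell-1)$, which supplies the $k(2\ell-1)$ term. The $h_p$-difference, via the standard abacus interpretation of the $\bar p$-quotient, reduces to an expression in $h(\al^{(0)})$ and $h(\la^{(0)})$; substituting this together with the $c_{\ne 0}$ and $\sfp$ changes into the ratio of the rescaling scalars in (\ref{EChi}) produces exactly the exponent in the statement. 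Lemma~\ref{LFChi} then supplies the one-step consistency check: the coefficient structure obtained from \cite[Proposition~6.6]{FKM} must agree with what one gets by iterating Lemma~\ref{LFChi} along the string of divided-power factors of $f(k,j)$.

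The main obstacle, and the reason I would rely on \cite[Proposition~6.6]{FKM} rather than attempt a direct induction on the length of $f(k,j)$, is that the intermediate partitions appearing in such an induction need not be strict, so many contributions cancel only after passing to $\Fock$ by quotienting out $\mathscr R$. Packaging this cancellation cleanly as a single Pieri-type rule on the $\bar p$-quotient is precisely the content of that proposition, so any direct approach would effectively duplicate its proof.
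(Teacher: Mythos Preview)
Your proposal is correct and matches the paper's approach: the paper simply asserts that the lemma follows immediately from \cite[Proposition~6.6]{FKM}, \cite[(5.1)]{FKM}, and Lemma~\ref{LFChi}, and your write-up is a faithful unpacking of that citation, correctly identifying the $c^\la_{\neq 0}-c^\al_{\neq 0}=k(2\ell-1)$ and $h_p=h(\la^{(0)})$ bookkeeping needed to pass between bases via (\ref{EChi}). One small clarification: Lemma~\ref{LFChi} is not merely a consistency check but is the ingredient that lets one translate the $f_i$-action into the $\chi$-basis, which is how \cite[Proposition~6.6]{FKM} is ultimately phrased; otherwise your account is on target.
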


\begin{Corollary} \label{CMatt} 
Let $j\in J$ and $c,k\in\Z_{\geq 0}$ satisfy $c+k\leq d$. For $\al\in\Par_0(\rho,c)$, 
 in the reduced Fock space $\Fock$, we have 
$$
f(k,j)u_\al=\sum_\la 2^{q(\la^{(0)}/\al^{(0)})+h(\al^{(0)})-h(\la^{(0)})}u_\la
$$
where the sum is over all $\la\in\Par_0(\rho,c+k)$ such that 
$\quot(\la)=(\la^{(0)},\dots,\la^{(\ell)})$ is obtained from $\quot(\al)=(\al^{(0)},\dots,\al^{(\ell)})$ 
by adding $k$ nodes to the components $\al^{(j)}$ and $\al^{(j+1)}$, with no two nodes added in the same column of $\al^{(j)}$ or in the same row of $\al^{(j+1)}$.
\end{Corollary}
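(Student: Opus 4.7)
The plan is to derive Corollary~\ref{CMatt} from Lemma~\ref{LMatt} by converting both sides from the $\chi$-basis to the $u$-basis via the rescaling (\ref{EChi}), namely
$$u_\mu = 2^{(h_p(\mu)+c^\mu_{\neq 0}-\sfp_\mu)/2}\,\chi_\mu \qquad (\mu\in\Par_0).$$
First I would write $f(k,j)u_\al = 2^{(h_p(\al)+c^\al_{\neq 0}-\sfp_\al)/2}\,f(k,j)\chi_\al$, expand $f(k,j)\chi_\al$ via Lemma~\ref{LMatt}, and then re-express each resulting $\chi_\la$ as $2^{(\sfp_\la-h_p(\la)-c^\la_{\neq 0})/2}\,u_\la$. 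Collecting exponents shows that the coefficient of $u_\la$ in $f(k,j)u_\al$ equals $2$ raised to
$$q(\la^{(0)}/\al^{(0)})\,+\,\tfrac{1}{2}\bigl[h_p(\al)-h_p(\la)+c^\al_{\neq 0}-c^\la_{\neq 0}+k(2\ell-1)+h(\al^{(0)})-h(\la^{(0)})\bigr],$$
where the three $\sfp$-contributions telescope to zero on the nose.

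Next I would feed in a direct weight computation for $f(k,j)$. Inspection of (\ref{EFmj}) shows that the divided-power monomial $f(k,j)$ lowers the weight by exactly
$$k\de\,=\,2k\al_0+2k\al_1+\dots+2k\al_{\ell-1}+k\al_\ell,$$
so $\cont(\la)=\cont(\al)+k\de$ and therefore $c^\la_{\neq 0}-c^\al_{\neq 0}=2k(\ell-1)+k=k(2\ell-1)$. This makes the middle portion $c^\al_{\neq 0}-c^\la_{\neq 0}+k(2\ell-1)$ of the bracket vanish, reducing the exponent to
$$q(\la^{(0)}/\al^{(0)})\,+\,\tfrac{1}{2}\bigl[h_p(\al)-h_p(\la)+h(\al^{(0)})-h(\la^{(0)})\bigr].$$

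To conclude, I would invoke the standard identity, built into the Morris--Yaseen construction of the $\bar p$-quotient of a strict partition (\cite{MorYas}), that for every $\mu\in\Par_0$ the number of parts of $\mu$ divisible by $p$ equals the number of parts of the $0$-component of its $\bar p$-quotient, i.e.\ $h_p(\mu)=h(\mu^{(0)})$. Applied to both $\al$ and $\la$, this turns the bracket into $2\bigl(h(\al^{(0)})-h(\la^{(0)})\bigr)$ and collapses the exponent to $q(\la^{(0)}/\al^{(0)})+h(\al^{(0)})-h(\la^{(0)})$, matching the claim. The only genuinely non-formal input is the identity $h_p(\mu)=h(\mu^{(0)})$; once that is in hand, everything else is bookkeeping between the two bases and the weight of $f(k,j)$.
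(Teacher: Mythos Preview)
Your proposal is correct and follows essentially the same route as the paper's proof: both convert $u_\al$ to $\chi_\al$ via (\ref{EChi}), apply Lemma~\ref{LMatt}, convert each $\chi_\la$ back to $u_\la$, and then simplify the resulting exponent using $c^\la_{\neq 0}-c^\al_{\neq 0}=k(2\ell-1)$ and the identity $h_p(\mu)=h(\mu^{(0)})$ for $\mu\in\Par_0$. The only cosmetic difference is that the paper asserts $h_p(\al)=h(\al^{(0)})$ and $h_p(\la)=h(\la^{(0)})$ at the outset without citation, whereas you attribute it to the Morris--Yaseen construction.
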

\begin{proof}
Note that $h_p(\al)=h(\al^{(0)})$. 
Moreover, for $\la$'s appearing in the sum, we have $c^\la_{\neq 0}-c^\al_{\neq 0}=k(2\ell-1)$ and $h_p(\la)=h(\la^{(0)})$. 
So we have by (\ref{EChi}) and Lemma~\ref{LMatt},
\begin{align*}
f(k,j)u_\al&=
2^{(-\sfp_\al+h_p(\al)+c^\al_{\neq 0})/2}
f(k,j)\chi_\al
\\
&=
2^{(-\sfp_\al+h_p(\al)+c^\al_{\neq 0})/2}\sum_\la 2^{q(\la^{(0)}/\al^{(0)})+(k(2\ell-1)+h(\al^{(0)})-h(\la^{(0)})+\sfp_\al-\sfp_\la)/2}\chi_\la
\\
&=
2^{q(\la^{(0)}/\al^{(0)})+h(\al^{(0)})+(k(2\ell-1)-h(\la^{(0)})-\sfp_\la+c^\al_{\neq 0})/2}\sum_\la 
2^{(\sfp_\la-h_p(\la)-c^\la_{\neq 0})/2}u_\la
\\
&=
\sum_\la 
2^{q(\la^{(0)}/\al^{(0)})+h(\al^{(0)})-h(\la^{(0)})}
u_\la,
\end{align*}
as required.
\end{proof}

\begin{Lemma} \label{LFMuBjuRho}
Let $(\mu,\bj)\in \La^\col(n,d)$. Then 
$$
f(\mu,\bj) u_\rho=\sum_{\la\in\Par_0(\rho,d)}K(\quot(\la);\mu,\bj)2^{-h(\la^{(0)})}u_\la.
$$
\end{Lemma}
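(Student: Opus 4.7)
The plan is to iterate Corollary~\ref{CMatt} and recognize the resulting combinatorial sum as the definition of $K(\quot(\la);\mu,\bj)$.

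First, I will induct on $n$. Since $f(\mu,\bj)u_\rho=f(\mu_n,j_n)\bigl(f(\mu_{n-1},j_{n-1})\cdots f(\mu_1,j_1)u_\rho\bigr)$, it is natural to record the computation as a sum over chains
$$\rho=\al_0\subseteq \al_1\subseteq\cdots\subseteq\al_n=\la$$
of strict partitions in $\Par_0(\rho,c_r)$ where $c_r:=\mu_1+\cdots+\mu_r$. For each step, Corollary~\ref{CMatt} tells us that $\al_r$ must be obtained from $\al_{r-1}$ by adding $\mu_r$ nodes to the components $\al_{r-1}^{(j_r)}$ and $\al_{r-1}^{(j_r+1)}$ of the $\bar p$-quotient, with no two nodes added in the same column of $\al_{r-1}^{(j_r)}$ and no two in the same row of $\al_{r-1}^{(j_r+1)}$. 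Recording, for each node of $\quot(\la)$, the index $r$ of the step at which it was added, gives exactly a bijection between such chains and colored tableaux $\T\in\CT(\quot(\la);\mu,\bj)$.

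Next I will track the coefficient. By Corollary~\ref{CMatt}, the step from $\al_{r-1}$ to $\al_r$ contributes
$$2^{\,q(\al_r^{(0)}/\al_{r-1}^{(0)})\,+\,h(\al_{r-1}^{(0)})\,-\,h(\al_r^{(0)})}=2^{\,q_r(\T)\,+\,h(\al_{r-1}^{(0)})\,-\,h(\al_r^{(0)})},$$
where I use that $q(\al_r^{(0)}/\al_{r-1}^{(0)})$ is precisely the quantity $q_r(\T)$ appearing in the definition of $q(\T)$ (cf.~\eqref{EqAlLa} and the definition preceding \eqref{EK}). Multiplying these contributions over $r=1,\dots,n$, the exponents involving heights telescope to $h(\rho^{(0)})-h(\la^{(0)})$, and since $\rho$ is a $\bar p$-core we have $\quot(\rho)=(\varnothing,\dots,\varnothing)$, so $h(\rho^{(0)})=0$. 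The remaining sum of $q_r(\T)$ over $r$ is by definition $q(\T)$.

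Finally, summing over all chains, i.e.\ over all tableaux $\T\in\CT(\quot(\la);\mu,\bj)$, yields
$$f(\mu,\bj)u_\rho=\sum_{\la\in\Par_0(\rho,d)}\Bigl(\sum_{\T\in\CT(\quot(\la);\mu,\bj)}2^{q(\T)}\Bigr)2^{-h(\la^{(0)})}u_\la,$$
and the inner sum is $K(\quot(\la);\mu,\bj)$ by \eqref{EK}, proving the lemma. The only nontrivial point is verifying that the chain-to-tableau bijection is compatible with the $q_r$-statistic, which amounts to unfolding the definitions; no further combinatorial identities are needed.
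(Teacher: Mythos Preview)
Your proposal is correct and takes essentially the same approach as the paper: the paper argues by induction on $n$, peeling off $f(\mu_n,j_n)$ at each step and invoking Corollary~\ref{CMatt}, while you iterate Corollary~\ref{CMatt} in one go and package the resulting chains $\rho=\al_0\subseteq\cdots\subseteq\al_n=\la$ directly as colored tableaux. The telescoping of $h(\al_{r-1}^{(0)})-h(\al_r^{(0)})$ to $-h(\la^{(0)})$ and the identification $q(\al_r^{(0)}/\al_{r-1}^{(0)})=q_r(\T)$ are exactly the bookkeeping the paper does inductively.
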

\begin{proof}
We apply induction on $n$. For the induction base case $n=1$ we apply Corollary~\ref{CMatt} to see that 
$$
f(\mu_1,j_1)u_\rho=\sum_\la 2^{q(\la^{(0)}/\rho^{(0)})+h(\rho^{(0)})-h(\la^{(0)})}
u_\la=\sum_\la 2^{q(\la^{(0)}/\rho^{(0)})}2^{-h(\la^{(0)})}
u_\la,
$$
where where the sums are over all $\la\in\Par_0(\rho,d)$ such that 
$\quot(\la)=(\la^{(0)},\dots,\la^{(\ell)})$ is obtained from $\quot(\rho)=(\varnothing,\dots,\varnothing)$ 
by adding $k$ nodes to the components $\rho^{(j)}=\varnothing$ and $\rho^{(j+1)}=\varnothing$, with no two nodes added in the same column of $\rho^{(j)}$ or in the same row of $\rho^{(j+1)}$, and $$
q(\la^{(0)}/\rho^{(0)})=|\{r\in\Z_{>0}\mid \la^{(0)}\ \text{contains a node in column $r$ but not in column $r+1$}\}|.
$$
It follows from the definitions that the $\la$'s appearing in the latter sum are exactly the $\la$'s with $K(\quot(\la);(\mu_1),j_1)\neq 0$, and for those $\la$ we have $K(\quot(\la);(\mu_1),j_1)=2^{q(\la^{(0)}/\rho^{(0)})}$. This establishes the induction base.

For the inductive step, suppose $n>1$. Let $\nu:=(\mu_1,\dots,\mu_{n-1})$, $\bk:=(j_1,\dots,j_{n-1})$, $c=\mu_1+\dots+\mu_{n-1}$. In particular, $f(\mu,\bj)=f(\mu_n,j_n)f(\nu,\bk)$. By the inductive assumption, we have
\begin{align*}
f(\mu,\bj) u_\rho&=f(\mu_n,j_n)f(\nu,\bk) u_\rho
\\
&=\sum_{\al\in\Par_0(\rho,c)}K(\quot(\al);\nu,\bk)2^{-h(\al^{(0)})}f(\mu_n,j_n)u_\al
\\
&=\sum_{\al\in\Par_0(\rho,c)}K(\quot(\al);\nu,\bk)2^{-h(\al^{(0)})}
\sum_\la 2^{q(\la^{(0)}/\al^{(0)})+h(\al^{(0)})-h(\la^{(0)})}u_\la
\end{align*}
where the second sum is over all $\la\in\Par_0(\rho,d)$ such that 
$\quot(\la)=(\la^{(0)},\dots,\la^{(\ell)})$ is obtained from $\quot(\al)=(\al^{(0)},\dots,\al^{(\ell)})$ 
by adding $\mu_n$ nodes to the components $\al^{(j_n)}$ and $\al^{(j_n+1)}$, with no two nodes added in the same column of $\al^{(j_n)}$ or in the same row of $\al^{(j_n+1)}$. 
It remains to note that $\la$'s appearing in the expression above are exactly those with $K(\quot(\la);\mu,\bj)\neq 0$, and for such $\la$ we have 
$$K(\quot(\la);\mu,\bj)=\sum_\al 2^{q(\la^{(0)}/\al^{(0)})}K(\quot(\al);\nu,\bk),$$ where the sum is over all $\al\in\Par_0(\rho,c)$ such that 
$\quot(\al)=(\al^{(0)},\dots,\al^{(\ell)})$ is obtained from $\quot(\la)=(\la^{(0)},\dots,\la^{(\ell)})$ 
by removing $\mu_n$ nodes to the components $\la^{(j_n)}$ and $\la^{(j_n+1)}$, with no two nodes removed in the same column of $\la^{(j_n)}$ or in the same row of $\la^{(j_n+1)}$. 
\end{proof}

Recall the definition $\Pi_{\mu,\bj}$ of from (\ref{EPiMuBj}). 

\begin{Corollary} \label{CFormForm}
Let $(\mu,\bj)\in\La^\col(m,d)$ and $(\nu,\bi)\in\La^\col(n,d)$. Then 
$$
(f(\mu,\bj)u_\rho, f(\nu,\bi)u_\rho)=(\Pi_{\mu,\bj},\Pi_{\nu,\bi})_\Sym.
$$
\end{Corollary}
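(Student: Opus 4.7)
The plan is to evaluate both sides of the claimed identity directly using the preceding lemmas and then match them via the bijection (\ref{EBijection}) between $\Par_0(\rho,d)$ and $\Par^I_0(d)$ given by $\la\mapsto\quot(\la)$.

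For the left-hand side, I would apply Lemma~\ref{LFMuBjuRho} to each of the two factors. Using the orthogonality formula (\ref{EFormDegFock}) together with the fact, already used in the proof of Corollary~\ref{CMatt}, that $h_p(\la)=h(\la^{(0)})$ for every $\la\in\Par_0(\rho,d)$, the two factors of $2^{-h(\la^{(0)})}$ coming from Lemma~\ref{LFMuBjuRho} combine with one factor of $2^{h(\la^{(0)})}$ coming from $(u_\la,u_\la)$ to yield
\begin{equation*}
(f(\mu,\bj)u_\rho,\,f(\nu,\bi)u_\rho)=\sum_{\la\in\Par_0(\rho,d)}K(\quot(\la);\mu,\bj)\,K(\quot(\la);\nu,\bi)\,2^{-h(\la^{(0)})}.
\end{equation*}

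For the right-hand side, I first need to evaluate the pairing $(\pi_\ula,\pi_\ual)_\Sym$. Because $\pzP_{\la^{(0)}}=2^{-h(\la^{(0)})}\pzQ_{\la^{(0)}}$, comparing the bases (\ref{EPi}) and (\ref{EKa}) shows that $\ka_\ula=2^{h(\la^{(0)})}\pi_\ula$. Combined with the duality $(\pi_\ula,\ka_\ual)_\Sym=\de_{\ula,\ual}$ given by (\ref{ESymInnerProd}), this gives $(\pi_\ula,\pi_\ual)_\Sym=\de_{\ula,\ual}\,2^{-h(\la^{(0)})}$. Substituting into the expansion (\ref{EPiMuBj}) of $\Pi_{\mu,\bj}$ and $\Pi_{\nu,\bi}$ produces
\begin{equation*}
(\Pi_{\mu,\bj},\Pi_{\nu,\bi})_\Sym=\sum_{\ula\in\Par^I_0(d)}K(\ula;\mu,\bj)\,K(\ula;\nu,\bi)\,2^{-h(\la^{(0)})}.
\end{equation*}

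The bijection $\la\mapsto\quot(\la)$ then identifies the two sums term by term, completing the proof. The argument is essentially a bookkeeping exercise once Lemma~\ref{LFMuBjuRho} is in hand, so there is no serious obstacle; the only point requiring care is the correct tracking of the factors of $2$ arising from the normalisation $\pzP=2^{-h}\pzQ$ of Schur's $P$- and $Q$-functions together with the weight $2^{h_p(\la)}$ in the Fock-space form (\ref{EFormDegFock}).
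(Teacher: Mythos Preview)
Your proof is correct and follows essentially the same route as the paper: expand both $f(\mu,\bj)u_\rho$ and $f(\nu,\bi)u_\rho$ via Lemma~\ref{LFMuBjuRho}, use the orthogonality~(\ref{EFormDegFock}) together with $h_p(\la)=h(\la^{(0)})$, and then match the resulting sum with $(\Pi_{\mu,\bj},\Pi_{\nu,\bi})_\Sym$ using $\ka_\ula=2^{h(\la^{(0)})}\pi_\ula$ and the duality of the $\pi$- and $\ka$-bases. The only cosmetic difference is that the paper rewrites one of the $\Pi$'s in the $\ka$-basis before pairing, whereas you first compute $(\pi_\ula,\pi_\ual)_\Sym=\de_{\ula,\ual}2^{-h(\la^{(0)})}$ and then expand; these are the same argument.
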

\begin{proof}
For $\la\in\Par_0(\rho,d)$, we have $h_p(\la)=h(\la^{(0)})$. So, 
by Lemma~\ref{LFMuBjuRho} and (\ref{EFormDegFock}), taking into account the bijection (\ref{EBijection}), we have 
\begin{align*}
(f(\mu,\bj)u_\rho, f(\nu,\bi)u_\rho)&=
\sum_{\la\in\Par_0(\rho,d)}K(\quot(\la);\mu,\bj)K(\quot(\la);\nu,\bi)2^{-2h(\la^{(0)})}(u_\la,u_\la)
\\
&=
\sum_{\ula\in\Par_0^I(d)}K(\ula;\mu,\bj)K(\ula;\nu,\bi)2^{-h(\la^{(0)})}.
\end{align*}
Since the bases $\{\pi_\ula\mid \ula\in\Par^I_0\}$ from (\ref{EPi})
and 
$\{\ka_\ula\mid \ula\in\Par^I_0\}$ from (\ref{EPi}) are dual to each other with respect to the inner product $(\cdot,\cdot)_\Sym$, we have 
\begin{align*}
&\sum_{\ula\in\Par_0^I(d)}K(\ula;\mu,\bj)K(\ula;\nu,\bi)2^{-h(\la^{(0)})}
\\
=\,&
(\sum_{\ula\in\Par_0^I(d)}K(\ula;\mu,\bj)2^{-h(\la^{(0)})}\ka_\ula,
\sum_{\ula\in\Par_0^I(d)}K(\ula;\nu,\bi)\pi_\ula)_\Sym
\\
=\,&
(\sum_{\ula\in\Par_0^I(d)}K(\ula;\mu,\bj)\pi_\ula,
\sum_{\ula\in\Par_0^I(d)}K(\ula;\nu,\bi)\pi_\ula)_\Sym
\\
=\,&(\Pi_{\mu,\bj},\Pi_{\nu,\bi})_\Sym,
\end{align*}
as required.
\end{proof}

\begin{Theorem} \label{TMainHalf}
Let $(\mu,\bj)\in\La^\col(m,d)$. Then 
$$
(f(\mu,\bj)u_\rho,f(\om_d)u_\rho)=\binom{d}{\mu_1\,\cdots\,\mu_n}\,4^{d-|\mu,\bj|_{\ell-1}}\,3^{|\mu,\bj|_{\ell-1}}.
$$
\end{Theorem}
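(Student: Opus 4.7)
The plan is to assemble the theorem directly from two earlier results: Corollary~\ref{CFormForm} (which translates inner products of $f(\mu,\bj)u_\rho$'s in the Fock space into the inner product $(\cdot,\cdot)_\Sym$ on $\Sym^I$) and Theorem~\ref{TMainInner} (which evaluates $(\Pi_{\mu,\bj},\Pi_{\om_d})_\Sym$). The only nontrivial step is a short bilinearity/compatibility argument that turns $f(\om_d)u_\rho$ on the Fock side into $\Pi_{\om_d}$ on the symmetric function side.

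First I would expand $f(\om_d)u_\rho$ using the definition (\ref{Efom}):
\[
f(\om_d)u_\rho \;=\; \sum_{\bi\in J^d} f(\om_d,\bi)\,u_\rho,
\]
so by linearity of the form,
\[
(f(\mu,\bj)u_\rho,\,f(\om_d)u_\rho) \;=\; \sum_{\bi\in J^d}\bigl(f(\mu,\bj)u_\rho,\,f(\om_d,\bi)u_\rho\bigr).
\]
Each summand is now of the type handled by Corollary~\ref{CFormForm}, giving
\[
\bigl(f(\mu,\bj)u_\rho,\,f(\om_d,\bi)u_\rho\bigr) \;=\; (\Pi_{\mu,\bj},\,\Pi_{\om_d,\bi})_\Sym.
\]
Summing over $\bi$ and using bilinearity of $(\cdot,\cdot)_\Sym$ together with the definition $\Pi_{\om_d}=\sum_{\bi\in J^d}\Pi_{\om_d,\bi}$, I obtain
\[
(f(\mu,\bj)u_\rho,\,f(\om_d)u_\rho) \;=\; (\Pi_{\mu,\bj},\,\Pi_{\om_d})_\Sym.
\]

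Finally I would invoke Theorem~\ref{TMainInner}, which evaluates the right-hand side as $\binom{d}{\mu_1\,\cdots\,\mu_n}\,4^{d-|\mu,\bj|_{\ell-1}}\,3^{|\mu,\bj|_{\ell-1}}$, completing the proof. There is no substantive obstacle here since all combinatorial and representation-theoretic work has been done earlier: Lemma~\ref{LFMuBjuRho} unpacked $f(\mu,\bj)u_\rho$ in the $u_\la$-basis of $\Fock$, Corollary~\ref{CFormForm} identified the Fock-space inner product with $(\cdot,\cdot)_\Sym$ by duality of the bases $\{\pi_\ula\}$ and $\{\ka_\ula\}$, and Theorem~\ref{TMainInner} carried out the multinomial bookkeeping. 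The only thing to watch is to make sure the assembly respects the fact that $\Pi_{\om_d}$ is defined as the sum $\sum_\bi \Pi_{\om_d,\bi}$, which matches precisely the definition of $f(\om_d)$ in (\ref{Efom}).
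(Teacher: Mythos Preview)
Your proof is correct and follows the same approach as the paper: apply Corollary~\ref{CFormForm} to identify the Fock-space inner product with $(\Pi_{\mu,\bj},\Pi_{\om_d})_\Sym$, then invoke Theorem~\ref{TMainInner}. The paper states this in two lines, implicitly absorbing your bilinearity argument into the citation of Corollary~\ref{CFormForm}; your explicit unpacking of the sum over $\bi\in J^d$ is a harmless elaboration of the same step.
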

\begin{proof}
By Corollary~\ref{CFormForm}, we have 
$
(f(\mu,\bj)u_\rho,f(\om_d)u_\rho)=(\Pi_{\mu,\bj},\Pi_{\om_d})_\Sym,
$
and the theorem follows from Theorem~\ref{TMainInner}. 
\end{proof}

Recall the vector $v_w\in V(\La_0)_{w\La_0}$ defined in Lemma~\ref{LWExt}. 

\begin{Theorem} \label{TMainHalf'}
Let $(\mu,\bj)\in\La^\col(m,d)$. Then 
$$
(f(\mu,\bj)v_w,f(\om_d)v_w)=\binom{d}{\mu_1\,\cdots\,\mu_n}\,4^{d-|\mu,\bj|_{\ell-1}}\,3^{|\mu,\bj|_{\ell-1}}.
$$
\end{Theorem}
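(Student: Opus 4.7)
The plan is to reduce Theorem~\ref{TMainHalf'} to the already-established Theorem~\ref{TMainHalf} by transporting the computation from $V(\La_0)$ into the reduced Fock space $\Fock$ via the isomorphism of Lemma~\ref{LFormDeg}.

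First, since $\theta = \La_0 - w\La_0 + d\de$ with $V(\La_0)_{\La_0-\theta}\neq 0$, the weight $\La_0 - w\La_0$ equals $\cont(\rho)$ for a unique $\bar p$-core $\rho$ by Lemma~\ref{LCorew}. Let $\iota\colon V(\La_0)\iso U(\g)\cdot u_\varnothing\subseteq\Fock$ be the isomorphism of $\g$-modules from Lemma~\ref{LFormDeg}, sending $v_+\mapsto u_\varnothing$. By Lemma~\ref{LCorevw}, $\iota(v_w)=u_\rho$. Since $\iota$ is $U(\g)$-linear, applying the elements $f(\mu,\bj),f(\om_d)\in U(\g)$ commutes with $\iota$, so
\[
\iota\bigl(f(\mu,\bj)v_w\bigr)=f(\mu,\bj)u_\rho,\qquad \iota\bigl(f(\om_d)v_w\bigr)=f(\om_d)u_\rho.
\]

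Next, Lemma~\ref{LFormDeg} asserts that the Shapovalov form on $V(\La_0)$ is the restriction of the form on $\Fock$ via $\iota$. Therefore
\[
\bigl(f(\mu,\bj)v_w,\,f(\om_d)v_w\bigr) = \bigl(f(\mu,\bj)u_\rho,\,f(\om_d)u_\rho\bigr),
\]
where the left-hand inner product is the Shapovalov form on $V(\La_0)$ and the right-hand one is the form on $\Fock$ from \eqref{EFormDegFock}.

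Finally, applying Theorem~\ref{TMainHalf} to the right-hand side yields exactly
\[
\binom{d}{\mu_1\,\cdots\,\mu_n}\,4^{d-|\mu,\bj|_{\ell-1}}\,3^{|\mu,\bj|_{\ell-1}},
\]
which is the desired formula. There is no real obstacle beyond bookkeeping: all the substantive work, namely the combinatorial identification of $f(\mu,\bj)u_\rho$ in terms of colored tableaux (Lemma~\ref{LFMuBjuRho}) and the symmetric-function identity $(\Pi_{\mu,\bj},\Pi_{\om_d})_\Sym=\binom{d}{\mu_1\,\cdots\,\mu_n}\,4^{d-|\mu,\bj|_{\ell-1}}\,3^{|\mu,\bj|_{\ell-1}}$ (Theorem~\ref{TMainInner}), has already been carried out; the only task here is to translate the statement from $u_\rho$ back to $v_w$ using the Fock-space realization.
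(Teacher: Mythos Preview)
Your proposal is correct and follows exactly the same route as the paper's own proof: apply Lemmas~\ref{LFormDeg} and~\ref{LCorevw} to identify $v_w$ with $u_\rho$ and transport the Shapovalov form into $\Fock$, then invoke Theorem~\ref{TMainHalf}. The paper compresses this into a single sentence, while you have spelled out the bookkeeping, but there is no substantive difference.
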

\begin{proof}
In view of Lemmas~\ref{LFormDeg} and \ref{LCorevw}, 
this follows from Theorem~\ref{TMainHalf}.
\end{proof}


\begin{thebibliography}{AB}



\bibitem{FKM}
M. Fayers, A. Kleshchev and L. Morotti, Decomposition numbers 
for abelian defect RoCK blocks of double covers of symmetric groups, {\em J. Lond. Math. Soc. (2)} {\bf 109} (2024), no. 2, Paper No. e12852, 49 pp.

\bibitem
{JantzenB}
J.C. Jantzen,
{\em Lectures on Quantum Groups}, 
American Mathematical Society, Providence, RI, 1996.


\bibitem
{Kac}
V. Kac,
{\em Infinite Dimensional Lie Algebras}, Third edition,
Cambridge University Press, Cambridge, 1995.


\bibitem
{KKO}
S.-J. Kang, M. Kashiwara and S-j. Oh, Supercategorification of quantum Kac-Moody algebras, {\em Adv. Math.} {\bf 242} (2013), 116--162. 


\bibitem{KMPY} M. Kashiwara, T. Miwa T, J.-U. H. Petersen, and C.M. Yung, Perfect Crystals and $q$-deformed Fock Spaces, {\em Selecta. Math.} {\bf 2} (1996), 415--499. 


\bibitem
{KlSpinEKTwo}
A. Kleshchev, 
RoCK blocks of double covers of symmetric groups and generalized Schur algebras, {\em in preparation}. 

\bibitem
{KlLi} 
A.\ Kleshchev and M.\ Livesey, RoCK blocks for double covers of symmetric groups and quiver Hecke superalgebras, \textit{Mem.\ Amer.\ Math.\ Soc.}, to appear; \texttt{arXiv:2201.06870}.


\bibitem{Lusztig}
G. Lusztig, Quantum deformations of certain simple modules over enveloping algebras, {\em Adv. Math.} {\bf 70} (1988), 237--249.

\bibitem{LT} B. Leclerc and  J.-Y. Thibon, $q$-Deformed Fock spaces and modular representations of spin symmetric groups, {\em J. Phys. A} {\bf  30} (1997) 6163--6176. 

\bibitem{Mac}
I.G. Macdonald, 
{\em Symmetric Functions and Hall Polynomials}, The Clarendon Press, Oxford University Press, New York, 1995. 


\bibitem{MorrisProd}
A.O. Morris, A note on the multiplication of Hall functions, {\em 
J. London Math. Soc.} {\bf 39} (1964), 481--488.

\bibitem{Morris} A.O. Morris, The spin representations of the symmetric group, {\em Canad. J. Math.} {\bf 17}(1965), 543-549.

\bibitem{MorYas} A.O. Morris and A.K. Yaseen, Some combinatorial results involving shifted Young diagrams, {\em Math. Proc. Camb. Phil. Soc.} {\bf 99}(1986), 23-31.

\bibitem{St}
J. Stembridge, Shifted tableaux and the projective representations of symmetric groups, {\em Adv. Math.} {\bf 74} (1989), 87--134.

\end{thebibliography}
\end{document}